\numberwithin{equation}{section}
\numberwithin{figure}{section}
\newtheorem{theorem}{Theorem}[section]
\newtheorem{lemma}{Lemma}[section]
\newtheorem{proposition}[theorem]{Proposition}
\newtheorem{remark}{Remark}[section]
\newtheorem{definition}{Definition}[section]
\renewcommand{\u}{{ u}}
\newcommand{\supp}{\mbox{\rm supp}}
\newcommand \R{\mathbb{R}}
\newcommand{\dd}{{\rm d}}
\begin{document}

\title[Isothermal Limit for Isentropic Gas Dynamics]
{Isothermal Limit of Entropy Solutions of the Euler Equations for Isentropic Gas Dynamics}

\author{Gui-Qiang G. Chen}
\address{G.-Q. Chen,
Mathematical Institute, University of Oxford\\
Oxford, OX2 6GG, UK;
School of Mathematical Sciences, Fudan University\\
Shanghai 200433, China;\\
Academy of Mathematics and Systems Science\\
Chinese Academy of Sciences, Beijing 100190, China}
\email{chengq@maths.ox.ac.uk}

\author{Fei-Min Huang}
\address{F.-M. Huang, Institute of Applied Mathematics\\
 Academy of Mathematics and Systems Science\\
Chinese Academy of Sciences, Beijing 100190, China}
\email{fhuang@amt.ac.cn}

\author{Tian-Yi Wang}
\address{T.-Y. Wang, Department of Mathematics, School of Science, Wuhan University of Technology,
	Wuhan, Hubei 430070, China}
\email{tianyiwang@whut.edu.cn; wangtianyi@amss.ac.cn}
\date{\today}

\begin{abstract}
We are concerned with the isothermal limit of entropy solutions in $L^\infty$, containing the vacuum states,
of the Euler equations for isentropic gas dynamics.
We prove that the entropy solutions in $L^\infty$ of the isentropic Euler equations
converge strongly to the corresponding entropy solutions of the isothermal Euler equations,
when the adiabatic exponent $\gamma \rightarrow 1$.
This is achieved by combining careful entropy analysis and refined kinetic formulation
with compensated compactness argument to obtain the required uniform estimates
for the limit. The entropy analysis involves careful estimates
for the relation between the corresponding entropy pairs for the isentropic and isothermal Euler equations
when the adiabatic exponent $\gamma\to 1$.
The kinetic formulation for the entropy solutions of the isentropic Euler equations
with the uniformly bounded initial data
is refined, so that the total variation of the dissipation measures in the formulation
is locally uniformly bounded with respect to $\gamma>1$.
The explicit asymptotic analysis of the Riemann solutions containing the vacuum states
is also presented.
\end{abstract}

\keywords{Isothermal limit, adiabatic exponent,
entropy solutions, vacuum, singular limit,
isentropic Euler equations, isothermal Euler equations,
compactness framework, entropy analysis, kinetic formulation,
strong convergence}
\subjclass[2010]{
35Q31; 
35L65; 
76N15; 
35B30; 
35B40; 
35D30
}
\maketitle

\bigskip
\section{Introduction}
We are concerned with the isothermal limit of entropy solutions in $L^\infty$, containing the vacuum states,
of the Euler equations for isentropic gas dynamics which is
the oldest, but still most prominent,
paradigm for the analysis of hyperbolic systems of conservation laws.
The one-dimensional Euler equations for barotropic gas dynamics take the form:
\begin{eqnarray}\label{1.5}
\begin{cases}
\partial_t\rho+\partial_x m =0,\\[1mm]
\partial_t m +\partial_x\big(\frac{m^2}{\rho}+p\big)=0,\\
\end{cases}
\end{eqnarray}
where $\rho$ denotes the fluid density, $p=p(\rho)$ is the pressure, and $m=\rho u$ is the momentum.
When $\rho>0$, $u:=\frac{m}{\rho}$ represents the fluid velocity.

\smallskip
The pressure-density relation under consideration can be written by scaling as
\begin{equation}\label{1.2a}
	p=\frac{\rho^{\gamma}}{\gamma}  \qquad \mbox{for $\gamma> 1$}
\end{equation}
for ideal isentropic gases,
while
\begin{equation}\label{1.2b}
	p=\rho
\end{equation}
for the isothermal gas.

Consider the Cauchy problem for system \eqref{1.5}
with large Cauchy data:
\begin{equation}\label{1.5b}
(\rho, m)|_{t=0}=(\rho_0, m_0)(x),
\end{equation}
with $\rho_0(x)\ge 0$.

\smallskip
The global existence of solutions with large initial data
in $L^{\infty}$ was first established in DiPerna \cite{DiPerna}
for $\gamma=1+\frac{2}{2n+1}$ with $n\geq 2$ integer.
For the general interval $1<\gamma \leq \frac{5}{3}$,
the global existence problem was solved in
Ding-Chen-Luo \cite{DCL,DCL2}
and Chen \cite{Chen}.
The adiabatic exponent range $\gamma\ge 3$ was solved in Lions-Perthame-Tadmor \cite{LPT},
and the remaining case $\frac{5}{3}<\gamma<3$ was closed in Lions-Perthame-Souganidis \cite{LPS}.
Later, the existence problem for the general pressure $p(\rho)$ with
\eqref{1.2a} as its leading asymptotic term
as $\rho\to 0$
was solved in Chen-LeFloch \cite{CLone},
whose approach further simplifies the proofs for the $\gamma$-law case for all $\gamma>1$.

\smallskip
For the isothermal case \eqref{1.2b},
the first existence result was obtained in Nishida \cite{Nishida}
for $BV$ solutions without vacuum for large initial data.
For general $L^\infty$ solutions containing the vacuum states,
it was first solved in Huang-Wang \cite{HuangWang}; see also \cite{LS} for a different approach.

\smallskip
For the isothermal limit when the adiabatic exponent $\gamma\rightarrow 1$,
the previous analysis is in the framework of $BV$ solutions away from the vacuum.
The first result was established by Nishida-Smoller \cite{Nishida-Smoller} in the Lagrangian coordinates.
Chen-Christoforou-Zhang  \cite{ChenCZhang1} established the
$L^1$--dependence estimate of the $BV$ solutions with respect to $\gamma>1$ in the Eulerian coordinates.
The isothermal limit for the BV solutions from the full Euler equations away from the vacuum
was solved by Chen-Christoforou-Zhang in \cite{ChenCZhang2}.
It is well-known that the vacuum states occur generically in the entropy solutions
of the Euler equations, even starting with the initial data without vacuum states.
A prototypical example is the Riemann problem with Riemann initial data away from the vacuum for which
there exists a global Riemann solution consisting of two rarefaction waves with an intermediate
vacuum state for the isentropic Euler equations; see \cite{Chen2000} and \S 7 below.
For the Riemann solution containing the vacuum states,
the fluid velocity $u$ is bounded for $\gamma>1$ near the vacuum,
however, $|u|$ goes to $\infty$ as the density goes to $0$ for $\gamma=1$; see also \S 7.
Another difference is the sound speed as the speed of pressure disturbance travelling through the medium
vanishes as the density goes to $0$ for $\gamma>1$, while the sound speed $c$ is a fixed constant
independent of the density for $\gamma=1$.
Therefore, the isothermal limit of entropy solutions in $L^\infty$, containing the vacuum states,
with large initial data has been a longstanding open problem in the analysis of nonlinear
partial differential equations and mathematical fluid mechanics.

\smallskip
The main objective of this paper is to provide an affirmative answer to this problem by providing
a rigorous convergence proof of the isothermal limit.
More precisely, we prove that the entropy solutions in $L^\infty$,  containing the vacuum states,
of the isentropic Euler equations
converge strongly to the corresponding entropy solutions of the isothermal Euler equations
when the adiabatic exponent $\gamma \rightarrow 1$.
This is achieved by combining careful entropy analysis and refined kinetic formulation
with compensated compactness argument to obtain the required uniform estimates
for the isothermal limit.
These effective approaches for the analysis of the isentropic Euler equations
are based on the subtle
analysis for the Euler–Poisson–Darboux
equation, while the entropy equation of the isothermal Euler equations
is not governed by the Euler-Poisson-Darboux equation.
The entropy analysis involves careful estimates
for the relation between the corresponding entropy pairs for the isentropic and isothermal Euler equations
when the adiabatic exponent $\gamma\to 1$.
The kinetic formulation for the entropy solutions of the isentropic Euler equations
with the uniformly bounded initial data
is refined, so that the total variation of the dissipation measures in the formulation
is locally uniformly bounded with respect to $\gamma>1$.
The compensated compactness argument is based on
the $H^{-1}$--compactness of entropy dissipation measures,
the div-curl lemma in \cite{Murat,Tartar}, and the Young measure presentation
theorem ({\it cf}. \cite{Ball,Tartar}); see also \cite{Dafermos,Evans}.

\smallskip
The rest of this paper is organized as follows:
In \S 2, we state some existence results and properties of entropy solutions
of the compressible Euler equations and present the main theorem of this paper, Theorem 2.3,
regarding the isothermal limit of the entropy solutions.
The uniform $L^\infty$ estimate of the entropy solutions with respect
to the adiabatic exponents $\gamma>1$ is shown in \S 3,
while the refined kinetic formulation is obtained in \S 4.
The uniform relation between the corresponding entropy pairs
for the isentropic and isothermal Euler equations
is established in \S 5.
In \S 6, we complete the proof of the main theorem, Theorem 2.3.
Finally, in \S 7, the explicit asymptotic analysis of the Riemann solutions
containing the vacuum states
is presented. In particular, the phenomenon of decavitation is shown as $\gamma\to 1$,
which is different from the formation of cavitation and concentration
in the vanishing pressure limit (equivalently, the high Mach limit) presented
in Chen-Liu \cite{ChenLiu}.

\section{Entropy Solutions of the Compressible Euler Equations}

In this section, we first present some existence results and properties of entropy solutions
of the compressible Euler equations \eqref{1.5}, which can be rewritten as
a hyperbolic system of conservation laws of the form:
\begin{equation}\label{E-1}
\partial_tU +\partial_x F^{(\theta)}(U)=0
\end{equation}
with $U=(\rho, m)$, $F^{(\theta)}(U)=(m, \frac{m^2}{\rho}+p^{(\theta)}(\rho))$,
and $\theta:=\frac{\gamma-1}{2}\ge 0$.
The pressure-density relation \eqref{1.2a} can be rewritten as
\begin{equation} \label{1.2c}
p^{(\theta)}(\rho)=\frac{\rho^{2\theta+1}}{2\theta+1}.
\end{equation}
System \eqref{E-1} represents the isentropic Euler equations when $\theta>0$
and the isothermal Euler equations when $\theta=0$.
Then the entropy pair $(\eta^{(\theta)}_*, q^{(\theta)}_*)$ of the mechanical energy and energy flux is of the form:
\begin{equation}\label{Energy}
(\eta^{(\theta)}_*, q^{(\theta)}_*)(\rho, m)
=(\frac{1}{2}\frac{m^2}{\rho}+ \rho e^{(\theta)}(\rho),	\,
	\frac{1}{2}\frac{m^3}{\rho^2}+ m \frac{{\rm d}(\rho e^{(\theta)}(\rho))}{{\rm d}\rho}),
\end{equation}
where $e^{(\theta)}(\rho)=\int^\rho_1\frac{p^{(\theta)}(\tau)}{\tau^2} d \tau$ is the specific internal energy.
Notice that $c^{(\theta)}(\rho)=\rho^\theta$, which implies that
$\lim_{\rho\rightarrow0}c^{(\theta)}(\rho)=0$, while $c^{(0)}(\rho)\equiv1$.

We now introduce the following broad class of entropy solutions:

\begin{definition}
Let $\theta \geq 0$.
A vector function $U(t,x)=(\rho, m)(t,x)\in L^\infty(\mathbb{R}^2_+)$ is called
an entropy solution of the Cauchy problem \eqref{1.5}--\eqref{1.5b}
with initial data $(\rho_0, m_0)(x)\in L^\infty(\mathbb{R})$ if,
for any test function $\phi\in C_0^\infty(\mathbb{R}^2_+)$,
\begin{eqnarray}
&&\int_{\mathbb{R}_+^2}\big(U\,\partial_t\phi+ F^{(\theta)}(U)\,\partial_x\phi\big)\,\dd x \dd t
+\int_{\mathbb{R}}U_0(x)\phi(0,x)\,\dd x=0,\label{2.8a}
\end{eqnarray}
and, for any test function $\phi\in C_0^\infty(\mathbb{R}^2_+)$ with $\phi\geq0$,
\begin{eqnarray}
\int_{\mathbb{R}_+^2}\big(\eta^{(\theta)}_*(U)\,\partial_t\phi
+q^{(\theta)}_*(U)\,\partial_x\phi\big)\, \dd x \dd t
+ \int_{\mathbb{R}}\eta^{(\theta)}_*(U_0)(x)\phi(0,x)\,\dd x\geq0.
\label{MechanicalEnergy}
\end{eqnarray}
\end{definition}

For system \eqref{E-1}, a general entropy pair $(\eta,q)$ obeys the following linear hyperbolic system:
\begin{equation}\label{EntropyPair}
\nabla q(\rho, m)=\nabla\eta (\rho, m)\nabla F^{(\theta)}(\rho, m).
\end{equation}
A weak entropy is an entropy $\eta(\rho, m)$ that vanishes at $\rho=0$ (vacuum), and
then the corresponding pair $(\eta, q)$ is called a weak entropy pair.
Notice that $u(t, x):=\frac{m(t,x)}{\rho(t,x)}$ is not well defined on the vacuum set $\{\rho(t, x)=0\}$,
which is one of the main reasons why the weak entropy is used,
while the momentum function $m(t,x)$ itself is
well-defined to be $0$ on $\{\rho(t, x)=0\}$ almost everywhere.
We will often use $m$ and $u=\frac{m}{\rho}$ alternatively when
$\rho>0$ without ambiguity.

Any entropy solution $(\rho, m)(t,x)$ is also required (whenever available)
to satisfy the additional entropy inequality:
\begin{equation}\label{EntropyInequality}
\partial_t\eta(\rho, m)+\partial_xq(\rho, m)\leq0
\end{equation}
in the sense of distributions for any weak entropy pair $(\eta, q)(\rho, m)$ satisfying that
$\eta(\rho, m)$ is a convex function with respect to
$(\rho, m)$.

As in \cite{Chen,CLone,DCL, DiPerna, HuangWang, LPS, LPT},
the weak entropy $\eta(\rho, u)$ satisfies
\begin{equation} \label{EEq}
\begin{cases}
\partial_{\rho\rho}\eta-\frac{p'(\rho)}{\rho^2}\partial_{uu}\eta=0\qquad\,\, \mbox{for $\rho>0$},\\[1mm]
\eta|_{\rho=0}=0,
\end{cases}
\end{equation}
and the corresponding entropy flux $q(\rho, u)$ satisfies
\begin{equation*}
\partial_\rho q(\rho, u)=u\partial_\rho \eta+\frac{p'(\rho)}{\rho}\partial_u\eta,
\qquad \partial_u q(\rho, u)=\rho \partial_\rho \eta+u\partial_u\eta
\end{equation*}
for a general pressure function $p(\rho)$, including $p^{(\theta)}(\rho)$ for $\theta\geq0$.

\subsection{Isentropic case}  When $\theta>0$, the Riemann invariants of system \eqref{E-1} are
\begin{equation*}
w_j^{(\theta)} =\frac{m}{\rho}+(-1)^{j+1}\frac{\rho^{\theta}}{\theta} \qquad\mbox{for $j=1,2$}.
\end{equation*}
The weak entropy kernel of \eqref{E-1} is
\begin{equation}\label{kernel}
\chi^{(\theta)}(\rho; s-u)=a_\theta \big[(\frac{\rho^\theta}{\theta})^2-(s-u)^2\big]_+^{\frac{1-\theta}{2\theta}}
=a_\theta [(w_1-s)(s-w_2)]_+^{\frac{1-\theta}{2\theta}}
\end{equation}
with $a_\theta=\theta^{\frac{1}{\theta}}\big(\int_{-1}^1[1-\tau^2]_+^{\frac{1-\theta}{2\theta}}\,\dd\tau\big)^{-1}$,
which is a fundamental solution of \eqref{EEq}, determined by
\begin{equation*}
\begin{cases}
\partial_{\rho\rho}\chi^{(\theta)}-\rho^{\gamma-3}\partial_{uu}\chi^{(\theta)}=0 \qquad\,\, \mbox{for $\rho>0$},\\[2mm]
\chi^{(\theta)}|_{\rho=0}=0,\quad
\chi^{(\theta)}_\rho|_{\rho=0}=\delta_{u=s},
\end{cases}
\end{equation*}
where we have used the notation: $[x]_+=\max(0, x)$.
Then we have the following representation of weak entropy pairs:

\begin{lemma}
For $\theta>0$, the weak entropy pair of \eqref{E-1} can be represented as
\begin{align*}
&\eta^{(\theta)}(\rho,m; \psi)=\int_{\mathbb{R}}\chi^{(\theta)}(\rho; s-\frac{m}{\rho})\psi(s)\,\dd s,\\
&q^{(\theta)}(\rho,m;\psi) =\int_{\mathbb{R}}\big(\theta s+(1-\theta)
\frac{m}{\rho}\big)\chi^{(\theta)}(\rho; s-\frac{m}{\rho})\psi(s)\,\dd s,
\end{align*}
for any $\psi\in C^2(\mathbb{R})$.
Furthermore, $\eta^{(\theta)}(\rho,m; \psi)$ is convex if and only if $\psi$ is
convex $(${\it cf.} {\rm \cite{LPT}}$)$.
\end{lemma}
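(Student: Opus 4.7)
The plan is to verify that the explicit kernel in \eqref{kernel} generates all weak entropies of \eqref{E-1}, derive the corresponding flux, and then address the convexity criterion (the last claim being the essential contribution of Lions--Perthame--Tadmor \cite{LPT}).

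First, I would note that $\chi^{(\theta)}$ is, by its defining ODE, a fundamental solution of \eqref{EEq}: for each fixed $s\in\mathbb{R}$, the map $(\rho,u)\mapsto\chi^{(\theta)}(\rho;s-u)$ solves $\partial_{\rho\rho}\chi^{(\theta)}-\rho^{\gamma-3}\partial_{uu}\chi^{(\theta)}=0$ for $\rho>0$ and vanishes at $\rho=0$. Since \eqref{EEq} is linear in $\eta$ with coefficients depending only on $\rho$, convolution in $s$ against any $\psi\in C^2(\mathbb{R})$ preserves the PDE, so $\eta^{(\theta)}(\rho,m;\psi)$ is a weak entropy with $\eta^{(\theta)}|_{\rho=0}=0$.

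Next, for the flux, I would introduce the candidate flux kernel $\sigma^{(\theta)}(\rho,u;s):=\bigl(\theta s+(1-\theta)u\bigr)\chi^{(\theta)}(\rho;s-u)$ and verify, using the explicit form \eqref{kernel} together with the entropy-flux relations
\begin{equation*}
\partial_\rho q=u\,\partial_\rho\eta+\tfrac{p'(\rho)}{\rho}\partial_u\eta,\qquad \partial_u q=\rho\,\partial_\rho\eta+u\,\partial_u\eta,
\end{equation*}
that $\sigma^{(\theta)}$ is the flux kernel paired with $\chi^{(\theta)}$. This reduces to an algebraic identity after invoking the PDE for $\chi^{(\theta)}$ and differentiating the prefactor $\theta s+(1-\theta)u$. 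Integrating against $\psi(s)\,\dd s$ then yields the stated formula for $q^{(\theta)}(\rho,m;\psi)$.

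For the convexity equivalence, I would change variables via $s=u+\frac{\rho^\theta}{\theta}y$ to rewrite
\begin{equation*}
\eta^{(\theta)}(\rho,m;\psi)=a_\theta\,\frac{\rho^\theta}{\theta}\int_{-1}^{1}(1-y^2)_+^{\frac{1-\theta}{2\theta}}\,\psi\!\Bigl(u+\tfrac{\rho^\theta}{\theta}y\Bigr)\,\dd y,
\end{equation*}
and compute the Hessian in $(\rho,m)$. The \emph{if} direction reduces to showing that, for each fixed $y\in(-1,1)$, the map $(\rho,m)\mapsto\frac{\rho^\theta}{\theta}\psi\bigl(\frac{m}{\rho}+\frac{\rho^\theta}{\theta}y\bigr)$ is convex in $(\rho,m)$ whenever $\psi$ is convex; this is a direct Hessian calculation exploiting the homogeneity $\rho\,\partial_\rho(\rho^\theta/\theta)=\rho^\theta$, and nonnegativity of the weight $(1-y^2)_+^{(1-\theta)/(2\theta)}$ then propagates convexity through the $y$-integration. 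The \emph{only if} direction follows by a localization argument: if $\psi''(s_0)<0$ at some $s_0$, then testing $\eta^{(\theta)}$ at states $(\rho,m)$ with $u=s_0$ and letting $\rho\to 0$ concentrates the kernel at $s_0$ and forces a negative eigenvalue of the Hessian. I expect the main obstacle to be the fixed-$y$ Hessian computation, which must be handled carefully to isolate the $\psi''$-multiple cleanly; here I would follow the construction in \cite{LPT}.
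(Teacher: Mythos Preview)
The paper does not supply its own proof of this lemma; it simply records the representation and cites \cite{LPT} for the convexity characterization. Your outline for the representation and flux parts is standard and essentially correct, though your change-of-variables display has the wrong prefactor: with $s=u+\tfrac{\rho^\theta}{\theta}y$ the Jacobian together with the power of $\tfrac{\rho^\theta}{\theta}$ inside $\chi^{(\theta)}$ produces a factor of $\rho$ (up to the normalizing constant), not $\tfrac{\rho^\theta}{\theta}$; compare with \eqref{eta}.

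The substantive gap is in the ``if'' direction of the convexity equivalence. Your plan is to show that, for each fixed $y\in(-1,1)$, the map $(\rho,m)\mapsto \rho\,\psi\bigl(\tfrac{m}{\rho}+\tfrac{\rho^\theta}{\theta}y\bigr)$ is convex whenever $\psi$ is convex, and then integrate. This fixed-$y$ claim is false. A direct Hessian computation gives
\[
\det\nabla^2_{(\rho,m)}\Bigl[\rho\,\psi\bigl(\tfrac{m}{\rho}+\tfrac{\rho^\theta}{\theta}y\bigr)\Bigr]
= (1+\theta)\,y\,\rho^{\theta-2}\,\psi'(v)\,\psi''(v),\qquad v=\tfrac{m}{\rho}+\tfrac{\rho^\theta}{\theta}y,
\]
whose sign is governed by $y\,\psi'(v)$ and is negative on a nonempty open set even for $\psi(s)=s^2$ (take $y<0$ and $u=\tfrac{m}{\rho}>0$ large). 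So convexity cannot be read off pointwise in $y$; the integration over $y$ is essential, because cancellations between positive and negative $y$ are what produce a nonnegative Hessian. The argument in \cite{LPT} proceeds differently: one computes the Hessian of the full $s$-integral and shows it equals $\int \psi''(s)\,K(\rho,u;s)\,\dd s$ for an explicit nonnegative kernel $K$, from which both directions follow immediately. You should replace the fixed-$y$ decomposition by this kernel identity.
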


Using the standard change of variable $s=u+\frac{\rho^\theta}{\theta}\tau$, we have
\begin{eqnarray}
\eta^{(\theta)}(\rho,m; \psi)
&=&a_\theta\int_{\mathbb{R}}\big[\big(\frac{\rho^\theta}{\theta}\big)^2-(u-s)^2\big]_+^{\frac{1-\theta}{2\theta}}\psi(s)\,\dd s
\nonumber\\
&=&
\frac{\rho\int_{\mathbb{R}}\psi(u+\frac{\rho^\theta}{\theta} \tau)[1-\tau^2]_+^{\frac{1-\theta}{2\theta}}\,\dd\tau}{\int_{\mathbb{R}}[1-\tau^2]_+^{\frac{1-\theta}{2\theta}}\,\dd\tau},
\qquad\label{eta}\\
q^{(\theta)}(\rho,m;\psi)
&=&a_\theta\int_{\mathbb{R}}(\theta s+(1-\theta)\u)\big[\big(\frac{\rho^{\theta}}{\theta}\big)^{2}-(\u-s)^2\big]_+^{\frac{1-\theta}{2\theta}}
\psi(s)\,\dd s
\nonumber\\
&=&\frac{\rho\int_{\mathbb{R}}\left(u+\rho^\theta \tau\right)\psi(u+\frac{\rho^\theta}{\theta}
	\tau)[1-\tau^2]_+^{\frac{1-\theta}{2 \theta}} \dd\tau}
{\int_{\mathbb{R}}[1-\tau^2]_+^{\frac{1-\theta}{2\theta}}\,\dd\tau}.
\label{q}
\end{eqnarray}

In particular, the energy and energy flux $(\eta^{(\theta)}_*, q^{(\theta)}_*)$
may also be obtained by choosing $\psi=\frac{1}{2}s^2-\frac{1}{2\theta(2\theta+1)}$.

\begin{theorem}[Existence Theorem for $\theta>0$]\label{ExistenceTheta1}
For any $\theta>0$, there exist both a bounded entropy solution $(\rho, m)$ of
the Cauchy problem \eqref{1.5} and \eqref{1.5b} on $\mathbb{R}^2_+$ and
a corresponding
bounded dissipation measure $D^{(\theta)}\le 0$ on $\mathbb{R}_+^2\times\mathbb{R}$ such that the weak entropy kernel
$\chi^{(\theta)}(\rho; s-\frac{m}{\rho})$ satisfies the kinetic formulation{\rm :}
\begin{equation}\label{kineticf1}
\partial_t\chi^{(\theta)}+\partial_x \big((\theta s+(1-\theta)\frac{m}{\rho})\chi^{(\theta)}\big)
=\partial_{ss} D^{(\theta)}(t, x; s)
\end{equation}	
with initial data $(\rho_0,m_0)$ such that $\rho_0(x)\ge 0$ a.e. $x\in \mathbb{R}$
and $(\rho_0, \frac{m_0}{\rho_0})(x)\in L^\infty(\mathbb{R})$.
Moreover, $(\rho, m)$ satisfies that
\begin{equation}\label{2.22a}
\frac{|m(t,x)|}{\rho(t,x)}+\frac{\rho(t,x)^{\theta}-1}{\theta}
\leq \Big\|\frac{|m_0|}{\rho_0}+\frac{\rho_0^\theta-1}{\theta}\Big\|_{L^\infty(\mathbb{R})}=:w_0^{(\theta)},
\end{equation}
and $\supp\, D^{(\theta)}(t,x; \cdot)\subset [-w_0^{(\theta)}, w_0^{(\theta)}]$.
\end{theorem}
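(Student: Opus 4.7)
The plan is to follow the compensated compactness framework of \cite{DiPerna, DCL, DCL2, Chen, LPT, LPS, CLone}, coupled with the kinetic formulation of \cite{LPT}. First, construct approximate solutions $(\rho^\v, m^\v)$ by adding artificial viscosity $\v \partial_{xx}$ to \eqref{1.5} (or via a monotone difference scheme such as Lax--Friedrichs), with initial data regularised away from the vacuum. For each fixed $\theta > 0$, the parabolic theory yields classical solutions on $\mathbb{R}^2_+$.

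I would next establish the uniform invariant-region bound \eqref{2.22a} for these approximations. The key objects are the shifted Riemann invariants $W_j^{(\theta)} := \frac{m}{\rho} + (-1)^{j+1}\frac{\rho^\theta - 1}{\theta}$, where the subtraction of $1/\theta$ is crucial to keep the quantity finite as $\theta \to 0$ (note $\frac{\rho^\theta - 1}{\theta} \to \ln\rho$). A direct computation shows that each $W_j^{(\theta)}$ satisfies a scalar transport--diffusion inequality along its characteristic for the viscous system, so the maximum principle produces a pointwise bound by the $L^\infty$ norm of the initial shifted invariants, uniformly in $\v$. Letting $\v \to 0$ later preserves the inequality, yielding \eqref{2.22a}.

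Passing to the limit $\v \to 0$ uses compensated compactness: the Murat--Tartar div-curl lemma, the integral representation \eqref{eta}--\eqref{q} of weak entropy pairs, and the Euler--Poisson--Darboux equation satisfied by $\chi^{(\theta)}$ permit one to reduce the associated Young measure to a Dirac mass, whence $(\rho^\v, m^\v)$ converges a.e.\ to an entropy solution $(\rho,m)$. The reduction is the main obstacle; for fixed $\theta > 0$ it is now classical thanks to the cited literature, so one can quote it directly (the delicate part, which is the uniformity in $\theta$ as $\theta\to 0$, is the business of the later sections).

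For the kinetic formulation, test the entropy inequality \eqref{EntropyInequality} against the family of weak entropy pairs $(\eta^{(\theta)}(\cdot;\psi), q^{(\theta)}(\cdot;\psi))$ for arbitrary convex $\psi \in C_c^2(\mathbb{R})$. Linearity of $\psi \mapsto \eta^{(\theta)}$ and Fubini convert the entropy inequality into the statement that the distribution
\[
T(\phi,\psi) := \Big\langle \partial_t \chi^{(\theta)} + \partial_x\!\big((\theta s + (1-\theta)\tfrac{m}{\rho})\chi^{(\theta)}\big),\, \phi(t,x)\psi(s) \Big\rangle
\]
is nonpositive whenever $\phi \ge 0$ and $\psi'' \ge 0$. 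Standard duality (write any nonnegative compactly supported $\chi(s)$ as $\psi''$ for a convex $\psi$) then produces a nonpositive measure $D^{(\theta)}(t,x;s)\le 0$ with $\partial_{ss}D^{(\theta)}$ equal to the left-hand side in the distributional sense, which is exactly \eqref{kineticf1}. The support property $\supp D^{(\theta)}(t,x;\cdot) \subset [-w_0^{(\theta)}, w_0^{(\theta)}]$ is inherited from the explicit formula \eqref{kernel}: $\chi^{(\theta)}(\rho; s-\frac{m}{\rho})$ is $s$-supported in $[\frac{m}{\rho}-\frac{\rho^\theta}{\theta},\, \frac{m}{\rho}+\frac{\rho^\theta}{\theta}]$, which lies in $[-w_0^{(\theta)}, w_0^{(\theta)}]$ by \eqref{2.22a}; the same is then true for the left-hand side of \eqref{kineticf1}, and a double $s$-antiderivative (normalised to vanish at $\pm\infty$) transfers this support to $D^{(\theta)}$.
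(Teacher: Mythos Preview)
The paper does not prove this theorem at all; it simply states the result and writes ``See \cite{Chen,CLone,CLtwo,DCL,DiPerna,LPS,LPT,Perthame} for the details.'' Your sketch is a faithful outline of precisely the approach developed across those references (viscosity or Lax--Friedrichs approximation, invariant-region bounds on the Riemann invariants, compensated compactness reduction of the Young measure, and the Lions--Perthame--Tadmor duality argument for the kinetic formulation), so there is nothing to compare and no gap to flag.
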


See \cite{Chen,CLone,CLtwo,DCL,DiPerna,LPS,LPT,Perthame} for the details.

\begin{remark}
The kinetic formulation \eqref{kineticf1} is equivalent
to the weak entropy inequality \eqref{EntropyInequality} for $L^\infty$ solutions.
See Chen-Perepelitsa \cite{ChenP} and Perthame \cite{Perthame}.
\end{remark}

\subsection{Isothermal case}  When $\theta=0$, the Riemann invariants are
\begin{equation*}
w_j^{(0)} =\rho e^{(-1)^{j+1}\frac{m}{\rho}} \qquad\mbox{for $j=1, 2$}.
\end{equation*}
Then typical weak entropy pairs are
\begin{equation}
\eta_\xi(\rho, m)=\rho^{\frac{1}{1-\xi^2}}e^{\frac{\xi}{1-\xi^2}\frac{m}{\rho}},
\ \ q_\xi(\rho, m)=(\frac{m}{\rho}+\xi)\eta_\xi
\qquad\,\, \mbox{for $\xi\in(-1, 1)$}.
\label{0entropy}
\end{equation}
This generates the following family of weak entropy pairs:

\begin{lemma}
When $\theta=0$, the entropy pair $(\eta^{(0)}, q^{(0)})${\rm :}
\begin{equation}\label{0eneq}
\eta^{(0)}(\rho,m; \psi)=\int_{-1}^{1}\eta_\xi(\rho, m)\psi(\xi)\,\dd\xi, \quad
q^{(0)}(\rho,m;\psi) =\int_{-1}^{1}\big(\frac{m}{\rho}+\xi\big)\eta_\xi(\rho, m)\psi(\xi)\,\dd\xi
\end{equation}
is a weak entropy pair of \eqref{E-1} for any $\psi\in L^\infty(\mathbb{R})$ with $\supp\, \psi(\cdot)\Subset (-1,1)$.
Furthermore, $\eta^{(0)}(\rho, m; \psi)$ is convex if and only if $\psi\geq0$.
 \end{lemma}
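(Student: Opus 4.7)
The plan proceeds in three stages.

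\textit{Stage 1 (weak entropy pair verification).} First, I verify that for each $\xi\in(-1,1)$ the pair $(\eta_\xi, q_\xi)$ in \eqref{0entropy} is a weak entropy pair of \eqref{E-1} with $p(\rho)=\rho$. Writing $\alpha(\xi)=\tfrac{1}{1-\xi^2}$, $\beta(\xi)=\tfrac{\xi}{1-\xi^2}$ and $u=m/\rho$, so that $\eta_\xi=\rho^{\alpha}e^{\beta u}$, direct differentiation yields
\begin{equation*}
\partial_{\rho\rho}\eta_\xi - \rho^{-2}\partial_{uu}\eta_\xi = \bigl(\alpha(\alpha-1) - \beta^2\bigr)\rho^{\alpha-2}e^{\beta u} = 0
\end{equation*}
by the identity $\alpha(\alpha-1)=\xi^2/(1-\xi^2)^2=\beta^2$, and $\alpha>0$ gives the vacuum condition $\eta_\xi|_{\rho=0}=0$. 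The two flux identities following \eqref{EEq} are verified in the same way for $q_\xi=(u+\xi)\eta_\xi$. Because $\supp\psi\Subset(-1,1)$, the coefficients $\alpha,\beta$ remain bounded on the support of $\psi$, and linearity gives at once that $(\eta^{(0)}(\cdot;\psi), q^{(0)}(\cdot;\psi))$ is a weak entropy pair.

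\textit{Stage 2 (sufficiency of the convexity characterization).} I compute the Hessian $H_\xi$ of $\eta_\xi$ with respect to $(\rho,m)$. Setting $b:=\beta u - (\alpha-1)$, a direct calculation produces the factorization
\begin{equation*}
H_\xi(\rho,m) \;=\; \rho^{\alpha-2}e^{\beta u}\left[\begin{pmatrix} b \\ -\beta \end{pmatrix}\begin{pmatrix} b & -\beta \end{pmatrix} + (\alpha-1)\begin{pmatrix} 1 & 0 \\ 0 & 0 \end{pmatrix}\right].
\end{equation*}
Since $\alpha(\xi)-1 = \xi^2/(1-\xi^2)\ge 0$ for all $\xi\in(-1,1)$, both summands are positive semidefinite rank-one matrices, so $H_\xi\succeq 0$ for every $\xi$. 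Consequently, if $\psi\ge 0$, then the Hessian of $\eta^{(0)}(\cdot;\psi)$ equals $\int_{-1}^{1}\psi(\xi)H_\xi\,\dd\xi$, a non-negative weighted integral of PSD matrices, hence PSD; this establishes the convexity.

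\textit{Stage 3 (necessity and main obstacle).} Conversely, assume $\eta^{(0)}(\cdot;\psi)$ is convex. Testing the Hessian against $e_2=(0,1)^\top$ gives
\begin{equation*}
\int_{-1}^{1}\psi(\xi)\,\beta(\xi)^2\,\rho^{\alpha(\xi)-2}\,e^{\beta(\xi)u}\,\dd\xi\;\ge\;0 \qquad \text{for all } \rho>0,\ u\in\mathbb{R}.
\end{equation*}
I plan to extract pointwise positivity of $\psi$ via Laplace's method. For any $\xi_0\in(-1,1)\setminus\{0\}$, I set $(\log\rho, u) = t\bigl(-\beta'(\xi_0),\,\alpha'(\xi_0)\bigr)$ and let $t\to+\infty$: the phase $\Phi(\xi):=\alpha(\xi)\log\rho+\beta(\xi)u$ has a critical point exactly at $\xi_0$ with $\partial_\xi^2\Phi(\xi_0) = t\bigl(\beta''\alpha'-\alpha''\beta'\bigr)(\xi_0) = -2t/(1-\xi_0^2)^3 < 0$, using the Wronskian identity $\beta''\alpha'-\alpha''\beta' = -2/(1-\xi^2)^3$ (a direct computation using the explicit forms of $\alpha$ and $\beta$). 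The standard Laplace asymptotic then forces $\psi(\xi_0)\ge 0$ at every Lebesgue point $\xi_0\neq 0$; since $\{0\}$ has Lebesgue measure zero, this yields $\psi\ge 0$ a.e. The main obstacle is making this localization rigorous for merely $L^\infty$ weights, which is handled by applying the asymptotic to the mollifications $\psi*\varphi_\varepsilon$ (which are continuous) and passing to the limit $\varepsilon\to 0$ via Lebesgue differentiation.
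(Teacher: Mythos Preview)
Your argument is correct and considerably more detailed than the paper's own treatment: the paper simply cites \cite{HuangWang} for both the representation formula and the strict convexity of each $\eta_\xi$, and then asserts without further justification that this strict convexity ``implies'' the if-and-only-if characterization. Your Stages~1--2 supply exactly the direct computations that the cited reference presumably contains (the factorization of $H_\xi$ is correct and elegant), and your Stage~3 Laplace-method argument is a genuine contribution beyond what the paper writes. Note that strict convexity of each $H_\xi$ alone does \emph{not} automatically force $\psi\ge 0$ from $\int\psi\,H_\xi\,\dd\xi\succeq 0$ (positive-definite matrices are not linearly independent in the relevant sense), so a localization device of the kind you provide really is needed; the paper leaves this to the reference.

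One technical imprecision in Stage~3 is worth flagging: the mollification step as written does not close. The convexity hypothesis gives $\int_{-1}^{1}\psi(\xi)\,w_t(\xi)\,\dd\xi\ge 0$ for the weights $w_t(\xi)=\beta(\xi)^2 e^{t\phi(\xi)}$, but it says nothing about $\int_{-1}^{1}(\psi*\varphi_\varepsilon)(\xi)\,w_t(\xi)\,\dd\xi$, so you cannot simply ``apply the asymptotic to the mollifications'' and then let $\varepsilon\to 0$. The clean fix is to skip mollification entirely: your own computations show that $\xi_0$ is the unique global maximum of $\phi$ on $(-1,1)$ with $\phi''(\xi_0)<0$, so the normalized weights $\tilde w_t:=w_t\big/\!\int w_t$ are dominated by rescaled Gaussians $C\sqrt{t}\,e^{-c\,t(\xi-\xi_0)^2}$ and therefore form a radially-dominated approximate identity at $\xi_0$. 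The standard result on approximate identities then yields $\int\psi\,\tilde w_t\to\psi(\xi_0)$ at every Lebesgue point $\xi_0\neq 0$ directly, and non-negativity of $\psi$ a.e.\ follows.
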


The representation formula \eqref{0eneq} is the result  in \cite[Lemma 2.1]{HuangWang}.
It has also been shown
in  \cite[Lemma 3.1]{HuangWang},
for any $\xi\in(-1, 1)$, $\eta_\xi(\rho, m)$ is strictly convex,
which implies that $\eta^{(0)}(\rho, m; \psi)$ is convex if and only if $\psi$ is nonnegative.

\begin{theorem}[Compactness Framework for $\theta=0$]\label{1framwork}
Let $(\rho^\varepsilon, m^\varepsilon)$ be a sequence of approximate solutions
of \eqref{E-1} with $\theta=0$ satisfying
\begin{equation}\label{Linftycondition}
0\leq\rho^\varepsilon(t,x)\leq C, \ \
|m^{\varepsilon}(t,x)|\leq \rho^{\varepsilon}(t,x)\big(|\ln \rho^{\varepsilon}(t,x)|+C\big)
 \qquad \mbox{{\it a.e.} $(t,x)\in \mathbb{R}_+^2$},
\end{equation}
where $C>0$ is a constant independent of $\varepsilon$. Assume that there exists a small constant $\delta>0$ such that,
for any $\xi\in (-\delta,\delta)$,
\begin{equation}\label{H-1condition}
\partial_t\eta_\xi(\rho^\varepsilon, m^\varepsilon)+\partial_x q_\xi(\rho^\varepsilon, m^\varepsilon) \quad\ \mbox{is compact in } H^{-1}_{\rm loc}
\end{equation}
for the weak entropy pairs $(\eta_\xi, q_\xi)$ defined in \eqref{0entropy}.
Then there exist both a subsequence $($still denoted$)$ $(\rho^\varepsilon, m^\varepsilon)$ and a vector function $(\rho, m)(t, x)$  such that
\begin{equation*}
(\rho^\varepsilon, m^\varepsilon)(t, x)\rightarrow(\rho, m)(t, x) \qquad\,\, \mbox{in $L^p_{\rm loc}(\mathbb{R}^2_+)$ for all $p\in[1, \infty)\,$ as $\varepsilon\to 0$}.
\end{equation*}
\end{theorem}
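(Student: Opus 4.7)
I would follow Tartar's compensated compactness framework using the restricted family $\{(\eta_\xi,q_\xi)\}_{|\xi|<\delta}$ from \eqref{0entropy}, in the spirit of Huang--Wang \cite{HuangWang}.

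\emph{Young measure setup.} The bound \eqref{Linftycondition} in fact yields a uniform $L^\infty$ estimate on $(\rho^\varepsilon,m^\varepsilon)$, since $s\mapsto s(|\ln s|+C)$ is bounded on $[0,C]$. By the Young measure representation theorem \cite{Ball,Tartar}, I extract a subsequence along which $f(\rho^\varepsilon,m^\varepsilon)\rightharpoonup\langle\nu_{t,x},f\rangle$ weak-$*$ in $L^\infty$ for every bounded continuous $f$, where $\nu_{t,x}$ is a measurable family of probability measures with compact support in the admissible set $K=\{(\rho,m):0\le\rho\le C,\ |m|\le\rho(|\ln\rho|+C)\}$. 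Proving that $\nu_{t,x}=\delta_{(\rho(t,x),m(t,x))}$ for a.e.\ $(t,x)$ then yields strong $L^p_{\rm loc}$ convergence for $1\le p<\infty$ by a standard uniform integrability argument.

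\emph{Div-curl and commutation relation.} For every $\xi,\xi'\in(-\delta,\delta)$, the pairs $(\eta_\xi,q_\xi)$ and $(\eta_{\xi'},q_{\xi'})$ are uniformly bounded on $K$, so \eqref{H-1condition} and the div-curl lemma \cite{Murat,Tartar} produce Tartar's commutation relation
\begin{equation*}
\langle\nu_{t,x},\eta_\xi q_{\xi'}-\eta_{\xi'}q_\xi\rangle=\langle\nu_{t,x},\eta_\xi\rangle\langle\nu_{t,x},q_{\xi'}\rangle-\langle\nu_{t,x},\eta_{\xi'}\rangle\langle\nu_{t,x},q_\xi\rangle.
\end{equation*}
The key algebraic identity $q_\xi=(u+\xi)\eta_\xi$ from \eqref{0entropy} collapses the left-hand side to $(\xi'-\xi)\langle\nu,\eta_\xi\eta_{\xi'}\rangle$ and expresses the right-hand side through $\langle\nu,\eta_\xi\rangle$ and $\langle\nu,u\eta_\xi\rangle$, giving a two-parameter identity valid on $(-\delta,\delta)^2$.

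\emph{Reduction to a Dirac mass.} The main obstacle is to deduce from this narrow parameter window $|\xi|<\delta$ that $\nu_{t,x}$ is a Dirac mass. My approach is to pass to the Riemann invariants $w_1=\rho e^{u}$, $w_2=\rho e^{-u}$; the bound $|u|\le|\ln\rho|+C$ keeps the pushforward of $\nu_{t,x}$ inside a compact subset of $\{w_1,w_2\ge 0\}$, while the weak entropies factor as $\eta_\xi=w_1^{1/(2(1-\xi))}w_2^{1/(2(1+\xi))}$. Both sides of the commutation identity are therefore real-analytic in $(\xi,\xi')\in(-1,1)^2$, so the identity initially established on $(-\delta,\delta)^2$ extends by analytic continuation to the full square $(-1,1)^2$; this reduces the situation to the ``full-parameter'' case that no longer depends on $\delta$. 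The argument then proceeds as in Huang--Wang \cite{HuangWang}: successive differentiation in $\xi,\xi'$ yields enough moment identities to force the pushforward of $\nu_{t,x}$ to concentrate at a single point on the non-vacuum stratum $\{\rho>0\}$; on the vacuum stratum $\{\rho=0\}$, the constraint $|m|\le\rho(|\ln\rho|+C)$ forces $m=0$, so $\nu_{t,x}$ is Dirac there as well. This analytic-continuation trick is what compensates for the narrow window of $H^{-1}_{\rm loc}$-compact entropies available in the isothermal case, in contrast to the isentropic case where the full parameter interval is directly at hand.
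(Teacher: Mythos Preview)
Your outline is correct. Note, however, that the paper does not actually give a proof of this theorem: it only records (Remark~2.2) that the Huang--Wang compactness framework \cite{HuangWang}, originally stated for the full range $\xi\in(-1,1)$, ``is still valid as stated above for some small constant $\delta>0$, by following the same argument.'' In other words, the paper's position is that the Huang--Wang reduction of the Young measure already uses only local information near $\xi=\xi'=0$ (derivatives and limits of the commutation relation there), so restricting to $|\xi|<\delta$ costs nothing.

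Your route is slightly different and, in a sense, more self-contained: rather than re-examining the Huang--Wang proof to confirm it only needs a neighbourhood of the origin, you observe that the functions $\xi\mapsto\langle\nu,\eta_\xi\rangle$, $\xi\mapsto\langle\nu,q_\xi\rangle$, and $(\xi,\xi')\mapsto\langle\nu,\eta_\xi\eta_{\xi'}\rangle$ are real-analytic on $(-1,1)$ (respectively $(-1,1)^2$) --- which follows from the factorisation $\eta_\xi=w_1^{1/(2(1-\xi))}w_2^{1/(2(1+\xi))}$ and the uniform bound $0\le w_1,w_2\le e^C$ on the support of $\nu$ --- and then analytically continue Tartar's identity from $(-\delta,\delta)^2$ to the full square. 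This buys you a clean black-box reduction to the $\delta=1$ case of \cite{HuangWang} without opening up its proof; the paper's remark instead asks the reader to verify that the internal mechanics of \cite{HuangWang} never leave a neighbourhood of $\xi=0$. Both are valid, and both ultimately lean on the same Huang--Wang reduction to show the Young measure is Dirac (including the mixed vacuum/non-vacuum case, which your commutation identity handles since $\eta_\xi$ vanishes at the vacuum).
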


\begin{remark}
In {\rm \cite{HuangWang}}, Theorem {\rm \ref{1framwork}} is stated when condition \eqref{H-1condition}
holds for the case that $\delta=1$.
It can be directly checked that Theorem {\rm \ref{1framwork}} is still valid as stated above
for some small constant $\delta>0$, by following the same argument in {\rm \cite{HuangWang}}.
\end{remark}

\begin{remark}
Condition \eqref{Linftycondition} is equivalent to
\begin{equation*}\label{Linftycondition+}
0\leq w_j^{(0)}(\rho^\varepsilon, m^\varepsilon)\leq C \qquad\,\, \mbox{for $j=1, 2$},
\end{equation*}
where $C>0$ is a constant independent of $\varepsilon$.
\end{remark}

\subsection{Main Theorem}
We now state the main theorem of this paper.
Since our main focus is on the isothermal limit $\theta\to 0$,  we always assume $\theta\in (0,\theta_0]$
for some fixed $\theta_0\in (0, \infty)$
without loss of generality; for simplicity, we take any fixed $\theta_0\in (0,1)$ in our analysis from now on throughout this paper.

\begin{theorem}[Main Theorem]\label{main}
For $\theta>0$, let $(\rho^{(\theta)}, m^{(\theta)})(t,x)$ be entropy solutions
of \eqref{E-1} satisfying \eqref{kineticf1}--\eqref{2.22a}
with initial data $(\rho^{(\theta)}_0, m^{(\theta)}_0)(x)$, as constructed in Theorem {\rm \ref{ExistenceTheta1}}.
Assume that there exists a constant $w_0>0$ {\rm (}{\it e.g.}, $w_0=\sup_{0<\theta\le \theta_0}w_0^{(\theta)}${\rm )}
independent of $\theta\in (0,\theta_0]$ such that
the initial data $(\rho^{(\theta)}_0, m^{(\theta)}_0)(x)$ satisfy that
\begin{equation}
\frac{|m_0^{(\theta)}(x)|}{\rho_0^{(\theta)}(x)}+\frac{(\rho_0^{(\theta)}(x))^{\theta}-1}{\theta}\leq w_0
\qquad\,\,\, \mbox{{\it a.e.} $x\in \mathbb{R}$}.  \label{TheataUniformBound}	
\end{equation}
Then there exist both a subsequence $($still denoted$)$ $(\rho^{(\theta)}, m^{(\theta)})$ and a vector function $(\rho, m)$ such that
\begin{equation*}
(\rho^{(\theta)}, m^{(\theta)})(t, x)\rightarrow(\rho, m)(t, x)
\quad\,\,\mbox{in $L^p_{\rm loc}(\mathbb{R}^2_+)$ for all $p\in[1,\infty)\,\,$ as $\theta\to 0$},
\end{equation*}
and $(\rho, m)(t, x)$ is an entropy solution of \eqref{E-1} with $\theta=0$ satisfying
\begin{equation*}
0\leq\rho(t, x)\leq e^{w_0}, \ \ |m(t, x)|\leq \rho(t, x)\big(|\ln \rho(t, x)|+w_0\big)
\qquad\,\, \mbox{{\it a.e.} $(t,x)\in \mathbb{R}_+^2$}
\end{equation*}
and the entropy inequality  \eqref{EntropyInequality} in the sense of distributions
for any weak entropy pair $(\eta_{\xi}, q_{\xi})$ for
$\xi\in (-\sqrt{2}+1,\sqrt{2}-1)$.
In particular, for any nonnegative function $\psi(\xi)\in L^\infty(\mathbb{R})$
with ${\rm supp}\, \psi(\xi)\Subset (-\sqrt{2}+1, \sqrt{2}-1)$,
 $(\rho, m)$ satisfies
 $$
 \partial_t \eta^{(0)}(\rho,m; \psi) +\partial_x q^{(0)}(\rho,m; \psi)\le 0
 $$
 in the sense of distributions.
\end{theorem}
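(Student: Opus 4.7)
The plan is to verify the hypotheses of the compactness framework Theorem \ref{1framwork} uniformly in $\theta\in(0,\theta_0]$ and then pass to the limit in the weak formulation and entropy inequality. First (Section 3), I would translate the Riemann-invariant bound \eqref{TheataUniformBound} (together with its $w_2^{(\theta)}$-analogue) into a $\theta$-independent $L^\infty$ bound of the form \eqref{Linftycondition}: using $\rho^\theta=1+\theta\ln\rho+O(\theta^2)$, the bound \eqref{2.22a} with $w_0^{(\theta)}\le w_0$ yields
\begin{equation*}
0\le\rho^{(\theta)}\le(1+\theta w_0)^{1/\theta}\le e^{w_0},\qquad \frac{|m^{(\theta)}|}{\rho^{(\theta)}}\le w_0+\frac{1-(\rho^{(\theta)})^\theta}{\theta}\le w_0+|\ln\rho^{(\theta)}|+o(1),
\end{equation*}
uniformly in $\theta$.

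Next, I would verify the $H^{-1}_{\mathrm{loc}}$-compactness condition \eqref{H-1condition} for every $\xi\in(-\sqrt{2}+1,\sqrt{2}-1)$. The idea is to use the entropy-comparison results of Section 5 to write
\begin{equation*}
(\eta_\xi,q_\xi)(\rho^{(\theta)},m^{(\theta)})=\bigl(\eta^{(\theta)}(\cdot;\psi_\xi^\theta),q^{(\theta)}(\cdot;\psi_\xi^\theta)\bigr)(\rho^{(\theta)},m^{(\theta)})+R_\xi^{(\theta)},
\end{equation*}
for a specific test function $\psi_\xi^\theta$ approximating the isothermal entropy kernel, with the remainder $R_\xi^{(\theta)}\to 0$ in $L^\infty_{\mathrm{loc}}$. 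Testing \eqref{kineticf1} against $\psi_\xi^\theta$ then gives
\begin{equation*}
\partial_t\eta^{(\theta)}(\cdot;\psi_\xi^\theta)+\partial_x q^{(\theta)}(\cdot;\psi_\xi^\theta)=\int_{\mathbb{R}}(\psi_\xi^\theta)''(s)\,\dd D^{(\theta)}(t,x;s),
\end{equation*}
which is a Radon measure of bounded total variation on compact sets, uniformly in $\theta$, by the refined kinetic formulation of Section 4. Combined with the uniform $L^\infty$ bound on the entropy pair from the first step, Murat's lemma upgrades this (bounded-measure right-hand side plus $W^{-1,\infty}_{\mathrm{loc}}$ control) to $H^{-1}_{\mathrm{loc}}$-compactness.

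Applying Theorem \ref{1framwork} then delivers a subsequence converging to a limit $(\rho,m)$ in $L^p_{\mathrm{loc}}$, with the stated $L^\infty$ bounds passing to the limit, and passing to the limit in \eqref{2.8a} is routine. For the entropy inequality, because $\eta_\xi$ is convex the Section-5 construction should produce a convex $\psi_\xi^\theta$; then \eqref{kineticf1} tested against $\psi_\xi^\theta$ yields $\partial_t\eta^{(\theta)}(\cdot;\psi_\xi^\theta)+\partial_x q^{(\theta)}(\cdot;\psi_\xi^\theta)\le 0$ distributionally (since $D^{(\theta)}\le 0$ and $(\psi_\xi^\theta)''\ge 0$ force $\int(\psi_\xi^\theta)''\,\dd D^{(\theta)}\le 0$), and this inequality passes to the limit, giving the claimed bound for every nonnegative $\psi\in L^\infty$ with $\supp\psi\Subset(-\sqrt{2}+1,\sqrt{2}-1)$ after integrating in $\xi$ via \eqref{0eneq}. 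The principal obstacle will be the Section-5 construction of $\psi_\xi^\theta$ and the quantitative control of $R_\xi^{(\theta)}$: matching $\eta_\xi=\rho^{1/(1-\xi^2)}e^{\xi m/(\rho(1-\xi^2))}$, which is exponential in $u=m/\rho$, with an integral of the compactly supported kernel $\chi^{(\theta)}(\rho;s-u)$ requires $e^{\xi s/(1-\xi^2)}$ to remain uniformly integrable against $\chi^{(\theta)}$ as $\theta\to 0$ while $|u|$ may grow like $|\ln\rho|$, which is the origin of the cutoff $|\xi|<\sqrt{2}-1$.
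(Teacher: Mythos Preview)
Your proposal is correct and follows essentially the same route as the paper: the decomposition $(\eta_\xi,q_\xi)=(\eta_\xi^{(\theta)},q_\xi^{(\theta)})+R_\xi^{(\theta)}$ with $\psi_\xi^\theta(s)=c_\theta\,e^{\xi s/(1-\xi^2)}$ (your guess is exactly \eqref{5.3}), the uniform total-variation bound on $D^{(\theta)}$ from Section~4, the $O(\sqrt\theta)$ control of $R_\xi^{(\theta)}$ from Proposition~\ref{lemma5.3}, and the Murat interpolation to upgrade $W^{-1,q}_{\mathrm{loc}}$-compactness plus $W^{-1,\infty}_{\mathrm{loc}}$-boundedness to $H^{-1}_{\mathrm{loc}}$-compactness. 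Two minor points: the $o(1)$ in your $L^\infty$ estimate is unnecessary since $\rho^\theta-1\ge\theta\ln\rho$ gives $\frac{1-\rho^\theta}{\theta}\le|\ln\rho|$ exactly, and you should separately pass to the limit in the mechanical-energy inequality \eqref{MechanicalEnergy} (via \eqref{EnergyConvergenceRate}) to verify Definition~2.1 in full.
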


\section{Uniform $L^\infty$--Estimate}
In this section, we establish the uniform $L^\infty$--estimate of the entropy solutions in Theorem \ref{ExistenceTheta1}
with respect to the adiabatic exponents $\gamma>1$, {\it i.e.}, $\theta>0$.

\begin{lemma}[Uniform $L^\infty$--Bound with respect to $\theta>0$]\label{gammaluniformbound}
Let $(\rho^{(\theta)},m^{(\theta)})(t,x)$ be the entropy solutions of \eqref{E-1} with
initial data  $(\rho^{(\theta)}_0, m^{(\theta)}_0)(x)$
satisfying \eqref{TheataUniformBound}, as
constructed in Theorem {\rm \ref{ExistenceTheta1}}.
Then, for almost everywhere $(t, x)\in \mathbb{R}^2_+$,
\begin{align}\label{uniformboumd}
0\le \rho^{(\theta)}(t,x)\leq e^{w_0},\qquad |m^{(\theta)}(t,x)|\leq \rho^{(\theta)}(t,x)\big(|\ln \rho^{(\theta)}(t,x)|+ w_0\big),
\end{align}
and $	(\eta^{(\theta)}_*, q^{(\theta)}_*)(\rho^{(\theta)}, m^{(\theta)})$ are uniformly bounded with respect to
$\theta\in (0, \theta_0]$.
\end{lemma}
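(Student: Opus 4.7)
The plan is to read off the bounds \eqref{uniformboumd} directly from the Riemann invariant estimate \eqref{2.22a} of Theorem \ref{ExistenceTheta1}, which under the hypothesis \eqref{TheataUniformBound} gives the uniform inequality
\[
\frac{|m^{(\theta)}|}{\rho^{(\theta)}}+\frac{(\rho^{(\theta)})^{\theta}-1}{\theta}\le w_0,
\]
and then to verify by direct computation that the mechanical energy pair degenerates controllably as $\theta\to 0$. First I would handle the density bound: dropping the nonnegative momentum term yields $(\rho^{(\theta)})^\theta\le 1+w_0\theta$, hence $\rho^{(\theta)}\le(1+w_0\theta)^{1/\theta}$. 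A short calculus check (differentiating $\log(1+y)/y$) shows that $y\mapsto(1+y)^{1/y}$ is strictly decreasing on $(0,\infty)$, so setting $y=w_0\theta$ gives $\rho^{(\theta)}\le\lim_{\theta\to 0^+}(1+w_0\theta)^{1/\theta}=e^{w_0}$ for every $\theta\in(0,\theta_0]$.

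For the momentum bound the residue after subtracting the density contribution is $|m^{(\theta)}|/\rho^{(\theta)}\le w_0+(1-(\rho^{(\theta)})^\theta)/\theta$, so it suffices to establish the pointwise inequality $(1-\rho^\theta)/\theta\le|\ln\rho|$ for every $\rho\in(0,e^{w_0}]$ and $\theta\in(0,\theta_0]$. When $\rho\ge 1$ the left-hand side is nonpositive and the inequality is trivial. When $0<\rho<1$, I would set $f(\theta):=(1-\rho^\theta)/\theta$, note that $f(0^+)=-\ln\rho=|\ln\rho|$, and show that $f$ is nonincreasing by computing
\[
f'(\theta)=\frac{\rho^\theta-1-\theta\rho^\theta\ln\rho}{\theta^{2}};
\]
the numerator vanishes at $\theta=0$ and has derivative $-\theta\rho^\theta(\ln\rho)^2\le 0$, hence remains nonpositive, so $f(\theta)\le|\ln\rho|$ as required.

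Finally, for the uniform boundedness of $(\eta^{(\theta)}_*,q^{(\theta)}_*)$, the explicit internal energy $e^{(\theta)}(\rho)=(\rho^{2\theta}-1)/(2\theta(2\theta+1))$ carries a factor $1/\theta$ that threatens to blow up as $\theta\to 0$. The cancellation I would exploit comes from a mean value estimate $|\rho^{2\theta}-1|\le 2\theta|\ln\rho|\,e^{2\theta_0 w_0}$, valid for $\rho\in(0,e^{w_0}]$, which yields $|\rho e^{(\theta)}(\rho)|\le C(w_0,\theta_0)\rho|\ln\rho|$, while the kinetic term satisfies $m^2/\rho\le\rho(|\ln\rho|+w_0)^2$, both uniformly bounded on the range. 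The flux $q^{(\theta)}_*$ is handled by the same ingredients since $\partial_\rho(\rho e^{(\theta)})=e^{(\theta)}(\rho)+\rho^{2\theta}/(2\theta+1)$ and $|m|^3/\rho^2\le\rho(|\ln\rho|+w_0)^3$. The main obstacle is precisely this singular cancellation: the $1/\theta$ in $e^{(\theta)}$ must be absorbed uniformly in $\rho$, including near the vacuum $\rho\to 0$ where $|\ln\rho|\to\infty$; the saving observation is that the density bound $\rho\le e^{w_0}$ keeps $\rho|\ln\rho|^k$ bounded for every fixed $k$, so all the potentially singular contributions collapse to harmless quantities.
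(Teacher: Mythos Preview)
Your proposal is correct and follows essentially the same approach as the paper: both read off the Riemann-invariant bound from Theorem~\ref{ExistenceTheta1}, derive $\rho^{(\theta)}\le(1+\theta w_0)^{1/\theta}\le e^{w_0}$, reduce the momentum bound to the elementary inequality $\rho^\theta-1-\theta\ln\rho\ge 0$ (the paper packages this as $\rho e^{|m|/\rho}\le e^{w_0}$, you as $(1-\rho^\theta)/\theta\le|\ln\rho|$), and bound the energy pair by canceling the $1/\theta$ against $|\rho^{2\theta}-1|$ and invoking the boundedness of $\rho|\ln\rho|^k$. The only cosmetic difference is that the paper consolidates the energy estimates through the single quantity $|m|/\rho^{2/3}$, whereas you treat each term separately via the mean-value estimate; both arguments are equally elementary.
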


\begin{proof}
Using Theorem \ref{ExistenceTheta1} and condition \eqref{TheataUniformBound},
for each fixed $\theta>0$,
the entropy solution $(\rho^{(\theta)},  m^{(\theta)})$ of \eqref{E-1}
satisfies
\begin{equation}\label{3.1}
0\leq\frac{(\rho^{(\theta)}(t,x))^{\theta}}{\theta}\leq w_0 +\frac{1}{\theta}, \,\,\,\,
\frac{|m^{(\theta)}(t,x)|}{\rho^{(\theta)}(t,x)}\leq w_0 +\frac{1-(\rho^{(\theta)}(t,x))^\theta}{\theta}
\qquad \mbox{for {\it a.e.} $(t, x)\in \mathbb{R}^2_+$},
\end{equation}
which implies
\begin{equation}\label{3.2a}
0\leq\rho^{(\theta)}(t,x)\leq(1+\theta w_0)^{\frac{1}{\theta}}\leq e^{w_0} \qquad \mbox{for {\it a.e.} $(t, x)\in \mathbb{R}^2_+$}.
\end{equation}
On the other hand,
\begin{equation*}
\rho^{(\theta)}(t,x) e^{\frac{|m^{(\theta)}(t,x)|}{\rho^{(\theta)}(t,x)}}
\leq \rho^{(\theta)}(t,x) e^{-\frac{\left(\rho^{(\theta)}(t,x)\right)^\theta -1}{\theta}}e^{w_0} \qquad \mbox{for {\it a.e.} $(t, x)\in \mathbb{R}^2_+$}.
\end{equation*}
Using
\begin{equation}\label{3.3a}
\rho^{\theta}-1-\theta \ln\rho\geq 0\qquad \mbox{for any $\rho\ge 0$ and $\theta>0$},
\end{equation}
we see that
$\rho^{(\theta)} e^{-\frac{(\rho^{(\theta)})^\theta -1}{\theta}}=e^{-\frac{1}{\theta}((\rho^{(\theta)})^\theta-1-\theta\ln\rho^{(\theta)})}\leq 1$,
which leads to
\begin{equation*}
\rho^{(\theta)}(t,x) e^{\frac{|m^{(\theta)}|}{\rho^{(\theta)}}}\leq  e^{w_0} \qquad \mbox{for {\it a.e.} $(t, x)\in \mathbb{R}^2_+$},
\end{equation*}
which is uniformly bounded with respect to $\theta\in (0,\theta_0]$.

\smallskip
We now show the uniform boundedness of the energy-energy flux pair $(\eta^{(\theta)}_*, q^{(\theta)}_*)(\rho, m)$.
From \eqref{Energy}, for $\theta>0$, we have
\begin{equation}\label{IsenEnergy}
(\eta^{(\theta)}_*, q^{(\theta)}_*)(\rho, m)
=(\frac{1}{2}\frac{m^2}{\rho}+ \rho \frac{\rho^{2\theta}-1}{2\theta(2\theta+1)},
	\,
	\frac{1}{2}\frac{m^3}{\rho^2}
     +m(\frac{\rho^{2\theta}-1}{2\theta}+\frac{1}{2\theta+1})).
\end{equation}
Based on estimates \eqref{3.1}--\eqref{3.2a} and
formula \eqref{IsenEnergy},
it suffices to estimate $\frac{m^{(\theta)}}{(\rho^{(\theta)})^{\frac{2}{3}}}$.
We first obtain from \eqref{3.2a}--\eqref{3.3a} that
\begin{eqnarray*}
\frac{|m^{(\theta)}|}{(\rho^{(\theta)})^{\frac{2}{3}}}
\leq w_0(\rho^{(\theta)})^{\frac{1}{3}}+(\rho^{(\theta)})^{\frac{1}{3}}\frac{1-(\rho^{(\theta)})^\theta}{\theta}
\leq w_0e^{\frac{w_0}{3}}-(\rho^{(\theta)})^{\frac{1}{3}}\ln \rho^{(\theta)}
\leq w_0e^{\frac{w_0}{3}} + 3e^{-1}.
\end{eqnarray*}
Then there exists a universal constant $C:=C(w_0, \theta_0)>0$ independent of $\theta\in (0, \theta_0]$ such that
\begin{eqnarray*}
|m^{(\theta)}|+\frac{|m^{(\theta)}|^2}{\rho^{(\theta)}}\le C(\rho^{(\theta)})^{\frac{1}{3}}\le C,
\qquad
\big|m^{(\theta)}\frac{\rho^{2\theta}-1}{2\theta}\big|
\le C(\rho^{(\theta)})^{\frac{1}{3}}\frac{(\rho^{(\theta)})^{\theta}-1}{\theta}
\le C.
\end{eqnarray*}
Thus, the uniform boundedness of $(\eta^{(\theta)}_*, q^{(\theta)}_*)(\rho, m)$ with respect to $\theta\in (0, \theta_0]$ follows.
This completes the proof.
\end{proof}

\section{Refined Kinetic Formulation for $\theta>0$}

In this section, we show that the  total variation of the dissipation measures
in the kinetic formulation
of the entropy solutions of the isentropic Euler equations \eqref{E-1} with initial data
satisfying \eqref{TheataUniformBound} is locally uniformly bounded with respect to $\theta\in (0, \theta_0]$.
As indicated in Theorem {\rm \ref{ExistenceTheta1}},
for fixed $\theta>0$,
$(\rho^{(\theta)}, m^{(\theta)})$ is an entropy solution of \eqref{E-1} satisfying
the kinetic equation \eqref{kineticf1} with kernel $\chi^{(\theta)}$ defined in \eqref{kernel}
and the dissipation measure $D^{(\theta)}$ on $\mathbb{R}_+^2\times\mathbb{R}$.

\begin{lemma}[Uniformly Bounded Total Variation of the Dissipation Measures]
Let the initial data $(\rho^{(\theta)}_0, m^{(\theta)}_0)(x)$ satisfy \eqref{TheataUniformBound}.
Assume that the entropy solutions  $(\rho^{(\theta)}, m^{(\theta)})(t,x)$ and
the dissipation measures $D^{(\theta)}(t,x;s)$
are as constructed in Theorem {\rm \ref{ExistenceTheta1}}.
Then the total variation of the dissipation measures $D^{(\theta)}(t,x;s)$
is locally uniformly bounded with respect to
$\theta\in (0, \theta_0]${\rm :} For any compact set $K\Subset\mathbb{R}^2_+$,
there exists $C(w_0; K)>0$ such that
\begin{equation}\label{DuffEa}
\int_{\mathbb{R}} |D^{(\theta)}(\cdot,\cdot; s)|(K)\, \dd s
\leq C(w_0;K),
\end{equation}
and $\supp\, D^{(\theta)}(t,x; \cdot)\subset [-w_0, w_0]$.
\end{lemma}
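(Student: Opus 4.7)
The plan is to extract the total variation of $D^{(\theta)}$ by testing the kinetic formulation \eqref{kineticf1} against a product test function $\phi(t,x)\psi(s)$ and then invoking the uniform $L^\infty$--bound of the mechanical energy--energy flux pair $(\eta^{(\theta)}_*, q^{(\theta)}_*)$ from Lemma \ref{gammaluniformbound}. Integrating by parts twice in $s$ and once in each of $(t,x)$ converts \eqref{kineticf1} into the identity
\begin{equation*}
\int_{\mathbb{R}^2_+} \big(\eta^{(\theta)}(\rho^{(\theta)},m^{(\theta)};\psi)\,\partial_t\phi + q^{(\theta)}(\rho^{(\theta)},m^{(\theta)};\psi)\,\partial_x\phi\big)\, \dd x\, \dd t = -\int \phi(t,x)\,\psi''(s)\, \dd D^{(\theta)}(t,x,s),
\end{equation*}
valid for any $\phi\in C^\infty_c(\mathbb{R}^2_+)$ and any $\psi\in C^2(\mathbb{R})$.

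The decisive choice is the shifted quadratic $\psi(s) = \frac{1}{2}s^2 - \frac{1}{2\theta(2\theta+1)}$, which, as noted immediately below \eqref{q}, reproduces exactly the mechanical energy pair $(\eta^{(\theta)}_*, q^{(\theta)}_*)$ defined in \eqref{Energy}. Two features of this choice are essential. First, $\psi''\equiv 1$, so the right-hand side collapses to $-\int\phi\, \dd D^{(\theta)} = \int\phi\, \dd |D^{(\theta)}|$, using that $D^{(\theta)}\leq 0$. Second, while the bare quadratic $\psi(s) = \frac{1}{2}s^2$ alone would yield an entropy containing the $\theta^{-1}$--divergent piece $\frac{\rho^{2\theta+1}}{2\theta(2\theta+1)}$, the compensating constant $-\frac{1}{2\theta(2\theta+1)}$ cancels precisely this divergence, producing $\eta^{(\theta)}_*$, which is uniformly bounded on $\theta\in(0,\theta_0]$ by Lemma \ref{gammaluniformbound}. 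This cancellation is really the only nontrivial input: without it, the $\psi''=1$ trick would only yield a $\theta$-dependent bound.

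Given $K\Subset\mathbb{R}^2_+$, I then take $\phi\in C^\infty_c(\mathbb{R}^2_+)$ with $0\leq\phi\leq 1$ and $\phi\equiv 1$ on $K$. The identity together with the uniform $L^\infty$--bound on $(\eta^{(\theta)}_*, q^{(\theta)}_*)$ gives
\begin{equation*}
|D^{(\theta)}|(K\times\mathbb{R})\leq\int\phi\, \dd|D^{(\theta)}|\leq C(w_0)\big(\|\partial_t\phi\|_{L^1}+\|\partial_x\phi\|_{L^1}\big) =: C(w_0;K),
\end{equation*}
which, by Fubini, is exactly the bound \eqref{DuffEa}. The support assertion $\supp\, D^{(\theta)}(t,x;\cdot)\subset[-w_0, w_0]$ is inherited directly from Theorem \ref{ExistenceTheta1}, since hypothesis \eqref{TheataUniformBound} forces $w_0^{(\theta)}\leq w_0$ for every $\theta\in(0,\theta_0]$.
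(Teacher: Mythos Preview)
Your proposal is correct and follows essentially the same route as the paper: test the kinetic formulation with the shifted quadratic $\psi_*(s)=\tfrac12 s^2-\tfrac{1}{2\theta(2\theta+1)}$ so that $\psi_*''\equiv 1$ and $(\eta^{(\theta)}(\psi_*),q^{(\theta)}(\psi_*))=(\eta^{(\theta)}_*,q^{(\theta)}_*)$, then pair with a cutoff $\phi$ and invoke Lemma~\ref{gammaluniformbound}. Your explicit remark that the constant $-\tfrac{1}{2\theta(2\theta+1)}$ is what removes the $\theta^{-1}$--divergence is a helpful clarification of why this particular $\psi$ is forced.
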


\begin{proof}
For any convex function $\psi(s)$, {\it i.e.}, $\psi''(s)\ge 0$, it follows from Theorem \ref{ExistenceTheta1}
that the entropy solutions $(\rho^{(\theta)}, m^{(\theta)})$ satisfy
\begin{equation}\label{4.3}
\partial_t\eta^{(\theta)}(\rho^{(\theta)}, m^{(\theta)}; \psi)+\partial_x q^{(\theta)} (\rho^{(\theta)}, m^{(\theta)}; \psi)
=\int_\R D^{(\theta)}(t,x; s) \psi''(s)\,{\rm d} s \le 0
\end{equation}
in the sense of distributions, since
$D^{(\theta)}(t,x;s)\leq0$ on $\R_+^2\times \R$.

In \eqref{4.3}, we choose
$\psi(s)=\psi_*(s):=\frac{1}{2}s^2-\frac{1}{2\theta(2\theta+1)}$ with  $\psi_*''(s)=1$,
which leads to
\begin{equation}\label{4.3b}
\partial_t\eta^{(\theta)}_*(\rho^{(\theta)}, m^{(\theta)}; \psi)
  +\partial_x q^{(\theta)}_* (\rho^{(\theta)}, m^{(\theta)}; \psi)
=-\int_\R D^{(\theta)}(t,x; s)\,{\rm d} s
=\int_\R |D^{(\theta)}(t,x; s)|\,{\rm d} s
\end{equation}
in the sense of distributions.
Then we take the smooth cut-off function $\varphi(t,x)\ge 0$ as a test function in \eqref{4.3b}
such that $\varphi\equiv 1$ on $K$,
and $\hat{K}:=\supp\,\varphi$ is compact in $\mathbb{R}^2_+$ so that $K\subset \hat{K}\Subset\mathbb{R}^2_+$.
Then we have
\begin{eqnarray}
\int_\mathbb{R}|D^{(\theta)}(\cdot,\cdot;s)|(K)\,\dd s
&\leq&
-\int_{\mathbb{R}}\langle
  D^{(\theta)}(\cdot,\cdot; s), \phi\rangle\, \dd s
  \nonumber\\
&=&\int_{\hat{K}}
\Big(\eta^{(\theta)}_*(\rho^{(\theta)}, m^{(\theta)})\partial_t\phi
  + q^{(\theta)}_*(\rho^{(\theta)}, m^{(\theta)})\partial_x\phi\Big)\, \dd x \dd t\nonumber\\[2mm]
&\le& C(w_0;K),\nonumber
\end{eqnarray}
where we have used Lemma \ref{gammaluniformbound}, $D^{(\theta)}(t,x;s)\le 0$,
and  equation \eqref{4.3} by taking  $\varphi$ as the test function.
Therefore, we conclude that the total variation of $D^{(\theta)}(t,x;s)$ is locally uniformly
bounded with respect to $\theta\in (0,\theta_0]$ as measures, as indicated in \eqref{DuffEa}.

Moreover, we know that $\supp\, D^{(\theta)}(t,x; \cdot)\subset [-w_0^{(\theta)}, w_0^{(\theta)}]$.
Since $0\le w_0^{(\theta)} \le w_0$,
we conclude that $\supp\, D^{(\theta)}(t,x; \cdot)\subset [-w_0, w_0]$.
\end{proof}

\section{Relation between $(\eta^{(\theta)}, q^{(\theta)})$ and $(\eta_{\xi}, q_{\xi})$}

In this section,
we show that a uniform relation between
a family of weak entropy pairs $(\eta^{(\theta)}_\xi, q^{(\theta)}_\xi)$
and the corresponding entropy pair $(\eta_{\xi}, q_{\xi})$
on any function $(\rho^{(\theta)}, m^{(\theta)})$ satisfying \eqref{uniformboumd}
with respect to
$\theta\in (0, \theta_0]$.

The weak entropy pairs $(\eta^{(\theta)}_\xi, q^{(\theta)}_\xi)$
of system \eqref{E-1} with $\theta>0$
under consideration are
\begin{eqnarray}
&&\eta^{(\theta)}_{\xi}(\rho,m)
 :=\frac{\rho\int_{\R}e^{\frac{\xi}{1-\xi^2}(\frac{m}{\rho}+\frac{\rho^{\theta}}{\theta}\tau)}
  [1-\tau^2]_+^{\frac{1-\theta}{2\theta}}\dd\tau}{\int_{\R}e^{\frac{\xi\tau}{\theta(1-\xi^2)}}
   [1-\tau^2]_+^{\frac{1-\theta}{2\theta}}\dd\tau}\ge 0,
 \label{xientropy}\\
&&q^{(\theta)}_{\xi}(\rho, m)
:=\frac{\rho\int_{\R}(\frac{m}{\rho}+\rho^{\theta}\tau)e^{\frac{\xi}{1-\xi^2}( \frac{m}{\rho}+\frac{\rho^{\theta}}{\theta}\tau)}[1-\tau^2]_+^{\frac{1-\theta}{2\theta}}
  \dd\tau}{\int_{\R}e^{\frac{\xi\tau}{\theta(1-\xi^2)}}[1-\tau^2]_+^{\frac{1-\theta}{2\theta}}\dd\tau}.
\label{xientropy-2}
\end{eqnarray}
These entropy pairs are obtained by choosing $\psi(s)$ in \eqref{eta}--\eqref{q} as
\begin{equation}\label{5.3}
\psi^{(\theta)}_{\xi}(s)
=\frac{\int_{\R}[1-\tau^2]_+^{\frac{1-\theta}{2\theta}}\dd\tau}{\int_{\R}e^{\frac{\xi\tau}{\theta(1-\xi^2)}}[1-\tau^2]_+^{\frac{1-\theta}{2\theta}}\dd\tau}\,e^{\frac{\xi}{1-\xi^2}s}.
\end{equation}

We first consider the uniform boundedness of $(\eta_{\xi}^{(\theta)}, q_{\xi}^{(\theta)})$.

\begin{lemma}\label{entropyuniform}
For any function $(\rho^{(\theta)}, m^{(\theta)})$ satisfying \eqref{uniformboumd},
$(\eta_{\xi}^{(\theta)}, q_{\xi}^{(\theta)})(\rho^{(\theta)}, m^{(\theta)})$ defined in
\eqref{xientropy}--\eqref{xientropy-2}
are uniformly bounded
with respect to
$\theta\in (0,\theta_0]$
when $\xi\in [-\sqrt{2}+1, \sqrt{2}-1]$,
\end{lemma}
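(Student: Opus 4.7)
The plan is to reduce control of both $\eta_\xi^{(\theta)}$ and $q_\xi^{(\theta)}$ to a single $\theta$-dependent ratio of integrals, and then exploit the pointwise bounds in \eqref{uniformboumd} through an elementary monotonicity argument. Throughout, write $u := m/\rho$ and $\alpha := (1-\theta)/(2\theta)$; by the evenness of $(1-\tau^2)^\alpha$, it suffices to treat $\xi \ge 0$. The starting point is the factorization
\[
\eta_\xi^{(\theta)}(\rho,m) = \rho\, e^{\xi u/(1-\xi^2)}\, R_\theta(\xi,\rho),
\qquad
R_\theta(\xi,\rho) := \frac{\int_{-1}^{1} e^{\xi\rho^\theta \tau/(\theta(1-\xi^2))}(1-\tau^2)^{\alpha}\,\dd\tau}{\int_{-1}^{1} e^{\xi\tau/(\theta(1-\xi^2))}(1-\tau^2)^{\alpha}\,\dd\tau},
\]
which isolates the entire $\theta$-dependence in $R_\theta$. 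The key elementary fact is that
\[
f(b) := \int_{-1}^{1} e^{b\tau}(1-\tau^2)^\alpha\,\dd\tau = 2\int_{0}^{1}\cosh(b\tau)(1-\tau^2)^\alpha\,\dd\tau
\]
is even in $b$ and strictly increasing in $|b|$.

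I would then split into two cases. For $\rho \le 1$ one has $\rho^\theta \le 1$, and the monotonicity of $f$ delivers $R_\theta(\xi,\rho) \le 1$ immediately; combined with the bound $|u| \le |\ln\rho| + w_0$ from \eqref{uniformboumd} and the identity $\rho^{-|\xi|/(1-\xi^2)} = e^{|\xi||\ln\rho|/(1-\xi^2)}$ on $(0,1]$, this yields
\[
\eta_\xi^{(\theta)}(\rho,m) \le e^{|\xi| w_0/(1-\xi^2)}\,\rho^{\,1 - |\xi|/(1-\xi^2)}.
\]
The restriction $|\xi| \le \sqrt{2} - 1$ corresponds precisely to the arithmetic $|\xi|/(1-\xi^2) \le 1/2$, so the exponent above is $\ge 1/2$, giving $\eta_\xi^{(\theta)} \le C(w_0)\,\rho^{1/2}$ uniformly in $\theta$. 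For $\rho \ge 1$, I would rewrite
\[
R_\theta(\xi,\rho) = \int_{-1}^{1} e^{a(\rho^\theta - 1)\tau}\,\dd\mu_\theta(\tau),
\]
with $a := \xi/(\theta(1-\xi^2))$ and $\mu_\theta$ the probability measure proportional to $e^{a\tau}(1-\tau^2)^\alpha\,\dd\tau$; the trivial bound $|\tau| \le 1$ then gives $R_\theta \le e^{\xi(\rho^\theta - 1)/(\theta(1-\xi^2))}$, which is uniformly bounded in $\theta$ because $(\rho^\theta - 1)/\theta \le w_0 e^{\theta_0 w_0}$ follows from $\rho \le e^{w_0}$ and convexity of the exponential. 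Together with $|u| \le 2w_0$, $\rho \le e^{w_0}$, and $\rho^\theta \le e^{\theta_0 w_0}$ on $\{1 \le \rho \le e^{w_0}\}$, this produces the uniform bound on $\eta_\xi^{(\theta)}$ in the second case.

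For the flux $q_\xi^{(\theta)}$ I would use the algebraic decomposition
\[
q_\xi^{(\theta)}(\rho,m) = u\,\eta_\xi^{(\theta)}(\rho,m) + \rho^\theta\,\widetilde\eta_\xi^{(\theta)}(\rho,m),
\]
where $\widetilde\eta_\xi^{(\theta)}$ is defined identically to $\eta_\xi^{(\theta)}$ but with an extra factor $\tau$ inside the numerator; $|\tau| \le 1$ immediately yields $|\widetilde\eta_\xi^{(\theta)}| \le \eta_\xi^{(\theta)}$, hence $|q_\xi^{(\theta)}| \le (|u| + \rho^\theta)\,\eta_\xi^{(\theta)}$. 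For $\rho \le 1$, the potentially large factor $|u| \lesssim |\ln\rho|$ is absorbed by the decay $\eta_\xi^{(\theta)} \le C\rho^{1/2}$, since $|\ln\rho|\,\rho^{1/2}$ is bounded on $(0,1]$; for $\rho \ge 1$, all factors are already bounded outright.

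The main obstacle is the first case: the sharp arithmetic $|\xi|/(1-\xi^2) \le 1/2$ (equivalently, $|\xi| \le \sqrt{2} - 1$) is exactly what guarantees the exponent of $\rho$ in the pointwise bound on $\eta_\xi^{(\theta)}$ remains $\ge 1/2$, which is the minimal decay needed to absorb the logarithmic singularity of $|u|$ in the estimate for $q_\xi^{(\theta)}$. Any enlargement of $|\xi|$ beyond $\sqrt{2} - 1$ would necessitate a considerably finer (e.g., Laplace- or Bessel-function-based) asymptotic analysis of the integrals defining $R_\theta$.
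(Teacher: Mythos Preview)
Your argument is correct and follows the same overall strategy as the paper: factor $\eta_\xi^{(\theta)}=\rho\,e^{\xi u/(1-\xi^2)}R_\theta$, treat the ratio $R_\theta$ as a weighted average, and split into $\rho\le 1$ and $\rho\ge 1$. The technical implementation differs in two places. For $\rho\le 1$ you bound $R_\theta\le 1$ directly via the monotonicity of $b\mapsto\int_{-1}^{1}e^{b\tau}(1-\tau^2)^\alpha\,\dd\tau$, whereas the paper takes a supremum over $\tau$ and then invokes the inequality $\rho^\theta-1\ge\theta\ln\rho$; your route is slightly cleaner and yields the sharper decay $\eta_\xi^{(\theta)}\le C\rho^{1-|\xi|/(1-\xi^2)}$ rather than the paper's $\rho^{1-2|\xi|/(1-\xi^2)}$ (so in your version the restriction $|\xi|\le\sqrt{2}-1$ is forced only by the flux estimate, not by $\eta$ itself). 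For $q_\xi^{(\theta)}$ your explicit decomposition $q_\xi^{(\theta)}=u\,\eta_\xi^{(\theta)}+\rho^\theta\widetilde\eta_\xi^{(\theta)}$ with $|\widetilde\eta_\xi^{(\theta)}|\le\eta_\xi^{(\theta)}$ spells out what the paper leaves as ``similar''. Both proofs ultimately rest on the same arithmetic $|\xi|/(1-\xi^2)\le 1/2$.
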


\begin{proof} For simplicity of notation, we drop the superscript in $(\rho^{(\theta)}, m^{(\theta)})$ in this proof.

First, $\xi\in [-\sqrt{2}+1,\sqrt{2}-1]$ is equivalent to the inequality: $\frac{2|\xi|}{1-\xi^2}\le 1$.
By definition, we have	
\begin{eqnarray}
\big|\eta^{(\theta)}_{\xi}(\rho, m)\big|
&=& \frac{\int_{-1}^{1}
	\big(\rho e^{\frac{\xi}{1-\xi^2}\frac{m}{\rho}}e^{\frac{\xi\tau}{1-\xi^2} \frac{\rho^{\theta}-1}{\theta}}\big)
    e^{\frac{\xi\tau}{\theta(1-\xi^2)}}[1-\tau^2]_+^{\frac{1-\theta}{2\theta}}\dd\tau}{\int_{-1}^{1}
      e^{\frac{\xi\tau}{\theta(1-\xi^2)}}[1-\tau^2]_+^{\frac{1-\theta}{2\theta}}\dd\tau}\nonumber\\
&\leq&\sup_{\tau\in[-1, 1]} \Big(\rho e^{\frac{\xi}{1-\xi^2}\frac{m}{\rho}}e^{\frac{\xi\tau}{1-\xi^2} \frac{\rho^{\theta}-1}{\theta}}\Big)
 \nonumber\\
&\leq& \big(\rho e^{\frac{|m|}{\rho}}\big)^{\frac{|\xi|}{1-\xi^2}}
\sup_{\tau\in[-1, 1]} \Big(\rho^{1-\frac{|\xi|}{1-\xi^2}} e^{\frac{\xi\tau}{1-\xi^2} \frac{\rho^{\theta}-1}{\theta}}\Big)\nonumber\\
&\leq&e^{\frac{|\xi|}{1-\xi^2}w_0}
\sup_{\tau\in[-1, 1]}\Big(\rho^{1-\frac{|\xi|}{1-\xi^2}} e^{\frac{\xi\tau}{1-\xi^2} \frac{\rho^{\theta}-1}{\theta}}\Big).
\nonumber
\end{eqnarray}
When $\rho\geq1$,
\begin{equation*}
\sup_{\tau\in[-1, 1]}\big(\rho^{1-\frac{|\xi|}{1-\xi^2}} e^{\frac{\xi\tau}{1-\xi^2} \frac{\rho^{\theta}-1}{\theta}}\big)
=\rho^{1-\frac{|\xi|}{1-\xi^2}} e^{\frac{|\xi|}{1-\xi^2} \frac{\rho^{\theta}-1}{\theta}}
\leq e^{\big(1-\frac{|\xi|}{1-\xi^2}\big)w_0+\frac{|\xi|}{1-\xi^2}w_0}\le e^{w_0};
\end{equation*}
when $\rho\in [0, 1)$,
\begin{equation*}
\sup_{\tau\in[-1, 1]}\big(\rho^{1-\frac{|\xi|}{1-\xi^2}} e^{\frac{\xi\tau}{1-\xi^2} \frac{\rho^{\theta}-1}{\theta}}\big)
=\rho^{1-\frac{|\xi|}{1-\xi^2}} e^{-\frac{|\xi|}{1-\xi^2} \frac{\rho^{\theta}-1}{\theta}}\leq\rho^{1-\frac{2|\xi|}{1-\xi^2}}
\leq 1 \leq e^{w_0},
\end{equation*}
where we have used \eqref{3.3a} and $\frac{\rho^{\theta}-1}{\theta}\leq w_0$.
The argument for the uniform boundedness of $q_{\xi}^{(\theta)}(\rho^{(\theta)}, m^{(\theta)})$ is similar.
\end{proof}

The main estimate of this section is

\begin{proposition}\label{lemma5.3}
Given any
$\theta>0$
and any function $(\rho^{(\theta)}, m^{(\theta)})$ satisfying \eqref{uniformboumd},
then,
for any interval $[a,b]\Subset (-\sqrt{2}+1,\sqrt{2}-1)$, there exists $C:=C(w_0; a,b)>0$
independent of $\theta\in (0,\theta_0]$ such that
\begin{eqnarray*}
\big|\eta^{(\theta)}_{\xi}(\rho^{(\theta)}, m^{(\theta)})-\eta_{\xi}(\rho^{(\theta)}, m^{(\theta)})\big|
+\big|q^{(\theta)}_{\xi}(\rho^{(\theta)}, m^{(\theta)})-q_{\xi}(\rho^{(\theta)}, m^{(\theta)})\big|
\le  C\sqrt{\theta}
\end{eqnarray*}
for any $\xi \in [a,b]$.
\end{proposition}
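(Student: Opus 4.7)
The strategy is to reduce the estimate to a saddle-point analysis of a ratio of Laplace-type integrals, treating the near-vacuum regime by direct decay bounds. Write
\[
\eta^{(\theta)}_\xi(\rho,m) = \rho\, e^{\xi u/(1-\xi^2)}\, R^\eta_\theta(\rho,\xi),\qquad \eta_\xi(\rho,m) = \rho\, e^{\xi u/(1-\xi^2)}\, \rho^{\xi^2/(1-\xi^2)},
\]
with $u=m/\rho$ and
\[
R^\eta_\theta(\rho,\xi) := \frac{\int_{-1}^1 e^{\xi\rho^\theta\tau/(\theta(1-\xi^2))}(1-\tau^2)^{(1-\theta)/(2\theta)}\,d\tau}{\int_{-1}^1 e^{\xi\tau/(\theta(1-\xi^2))}(1-\tau^2)^{(1-\theta)/(2\theta)}\,d\tau}.
\]
The condition $\xi\in[a,b]\Subset(-\sqrt{2}+1,\sqrt{2}-1)$ gives $|\xi|/(1-\xi^2) \le 1/2-\alpha$ for some $\alpha=\alpha(a,b)>0$, so combining with \eqref{uniformboumd} the prefactor is bounded by $C\rho^{1/2+\alpha}$ on $\{\rho\le 1\}$ (and by a uniform constant on $[1,e^{w_0}]$). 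The analogous factorization for $q^{(\theta)}_\xi$ introduces an extra integrand factor $u+\rho^\theta\tau$ in the numerator.

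\textbf{Saddle-point analysis.} Both integrals in $R^\eta_\theta$ have the form $J(\beta)=\int_{-1}^1 e^{\Phi_\beta(\tau)/\theta}\,d\tau$ with
\[
\Phi_\beta(\tau) = \frac{\xi\beta\tau}{1-\xi^2} + \frac{1-\theta}{2}\ln(1-\tau^2),\qquad \beta\in\{\rho^\theta,1\}.
\]
The saddle equation $\tau^*/(1-(\tau^*)^2)=\xi\beta/((1-\theta)(1-\xi^2))$ has a unique interior root $\tau^*(\beta)$ with $\tau^*(1)=\xi+O(\theta)$ and $\tau^*(\rho^\theta)-\tau^*(1)=\xi(1-\xi^2)(\rho^\theta-1)/(1+\xi^2)+O(\theta^2)$. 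Since $|\xi|<\sqrt{2}-1<1$ keeps $\tau^*$ uniformly away from $\pm 1$ (so $-\Phi_\beta''(\tau^*)$ is bounded above and below), standard Laplace asymptotics (Gaussian scaling $\tau=\tau^*+\sqrt{\theta}\sigma$, Taylor control of cubic and higher remainder terms, exponentially small tails) give
\[
J(\beta)=e^{\Phi_\beta(\tau^*(\beta))/\theta}\sqrt{\frac{2\pi\theta(1-(\tau^*(\beta))^2)^2}{(1-\theta)(1+(\tau^*(\beta))^2)}}\,(1+O(\theta)).
\]
Taking the ratio and using cancellation of first-order saddle-shift corrections (from the saddle equation itself) yields
\[
R^\eta_\theta(\rho,\xi) = \exp\!\Big(\frac{\xi^2(\rho^\theta-1)}{\theta(1-\xi^2)}\Big)\,(1+O(\theta)).
\]

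\textbf{Comparison and vacuum cutoff.} We compare with $\rho^{\xi^2/(1-\xi^2)}=\exp(\xi^2\ln\rho/(1-\xi^2))$ using the elementary Taylor inequality
\[
0\le \frac{\rho^\theta-1}{\theta}-\ln\rho\le \tfrac{\theta}{2}(\ln\rho)^2 e^{\theta|\ln\rho|},
\]
and set the cutoff $\rho_*:=e^{-\theta^{-1/4}}$. For $\rho\ge\rho_*$, we have $|\ln\rho|\le\theta^{-1/4}$, so the right side of the Taylor bound is $O(\sqrt{\theta})$; combining with the saddle-point expansion gives $|R^\eta_\theta(\rho,\xi)-\rho^{\xi^2/(1-\xi^2)}|\le C\sqrt{\theta}\,\rho^{\xi^2/(1-\xi^2)}$, and multiplication by the prefactor bound $C\rho^{1/2+\alpha}$ yields $|\eta^{(\theta)}_\xi-\eta_\xi|\le C\sqrt{\theta}$, since the product of power factors is uniformly bounded. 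For $\rho<\rho_*$ (the vacuum regime), both $|\eta^{(\theta)}_\xi|$ and $|\eta_\xi|$ are dominated by $C\rho_*^{1/2+\alpha}=Ce^{-(1/2+\alpha)\theta^{-1/4}}=o(\sqrt{\theta})$, hence so is their difference. The same strategy applies to $q^{(\theta)}_\xi-q_\xi$: the extra factor $u+\rho^\theta\tau$ evaluates at the saddle as $u+\rho^\theta\xi+O(\theta)=u+\xi+O(\theta)$, matching the $(u+\xi)$ factor in $q_\xi=(u+\xi)\eta_\xi$ up to $O(\theta)$, while the potentially large quantity $|u|\le|\ln\rho|+C$ is absorbed by the improved prefactor bound $\rho^{1/(1+|\xi|)}|\ln\rho|\le C$.

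\textbf{Main obstacle.} The hardest point is uniformity in $\rho$ up to the vacuum: naive saddle-point asymptotics produce remainders involving $\theta(\ln\rho)^2$, which blow up as $\rho\to 0$. The cutoff $\rho_*=e^{-\theta^{-1/4}}$ is chosen precisely so that the Laplace error on $\{\rho\ge\rho_*\}$ and the prefactor decay on $\{\rho<\rho_*\}$ each contribute $O(\sqrt{\theta})$; this balance, rather than the sharper $O(\theta)$ Laplace remainder available when $\rho$ is bounded below, determines the rate $\sqrt{\theta}$ in the proposition.
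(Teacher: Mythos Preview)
Your approach is sound and reaches the right conclusion, but it is organized quite differently from the paper's and carries a couple of loose ends. The paper avoids both the cutoff and the separate numerator/denominator Laplace expansions by writing
\[
\eta^{(\theta)}_\xi-\eta_\xi=\langle \nu^{(\theta)}_\xi,\,f_\xi\rangle,\qquad
\nu^{(\theta)}_\xi(\tau)=\frac{e^{\frac{\xi\tau}{\theta(1-\xi^2)}}[1-\tau^2]_+^{\frac{1-\theta}{2\theta}}}{\int_{-1}^{1} e^{\frac{\xi\tau}{\theta(1-\xi^2)}}[1-\tau^2]_+^{\frac{1-\theta}{2\theta}}\,\dd\tau},
\]
where $f_\xi(\tau)=\rho e^{\frac{\xi u}{1-\xi^2}}\big(e^{\frac{\xi(\rho^\theta-1)\tau}{\theta(1-\xi^2)}}-\rho^{\frac{\xi^2}{1-\xi^2}}\big)$. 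A single concentration estimate (upper and lower Gaussian bounds on the exponent $g(\tau)=\frac{\xi\tau}{1-\xi^2}+\frac12\ln(1-\tau^2)$ about its critical point $\tau=\xi$) gives $|\langle\nu^{(\theta)}_\xi,f_\xi-f_\xi(\xi)\rangle|\le C\|f_\xi'\|_\infty\sqrt{\theta}$, while $|f_\xi(\xi)|=O(\theta)$ directly. The key economy is that $\|f_\xi'\|_\infty\le C(w_0;a,b)$ holds \emph{uniformly in $\rho\in[0,e^{w_0}]$} (using $(1-\rho^\theta)/\theta\le -\ln\rho$ for $\rho<1$ and $(\rho^\theta-1)/\theta\le w_0$ for $\rho\ge 1$), so no two-regime split is needed. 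Your route trades this single probability-measure argument for a full Laplace expansion of each integral plus a vacuum cutoff; both lead to $\sqrt{\theta}$, but the paper's is shorter and explains why $\sqrt{\theta}$ is natural.

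Two points in your write-up need patching. First, the claim $R^\eta_\theta=e^{\frac{\xi^2(\rho^\theta-1)}{\theta(1-\xi^2)}}(1+O(\theta))$ is only valid when $\rho$ is bounded away from zero: near your cutoff $\rho_*=e^{-\theta^{-1/4}}$ one has $|\rho^\theta-1|\sim\theta^{3/4}$, so the second-order exponent correction $(\rho^\theta-1)^2/\theta$ contributes $O(\sqrt{\theta})$, not $O(\theta)$; this does not damage the final bound. Second, in the vacuum regime $\rho<\rho_*$ you assert $|\eta^{(\theta)}_\xi|\le C\rho_*^{1/2+\alpha}$, but your prefactor bound only controls $\rho e^{\xi u/(1-\xi^2)}$, not $R^\eta_\theta$; you need the direct estimate of Lemma~\ref{entropyuniform} (interpreting $\eta^{(\theta)}_\xi$ as a $\nu^{(\theta)}_\xi$-average and taking the sup in $\tau$), which yields $|\eta^{(\theta)}_\xi|\le C\rho^{1-\frac{2|\xi|}{1-\xi^2}}$ and closes the argument.
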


\begin{proof} For simplicity, we drop the superscript in $(\rho^{(\theta)}, m^{(\theta)})$ in the proof.
We divide the proof into four steps.

\medskip
1. The difference
between $\eta_\xi^{(\theta)}$ and $\eta_\xi$ is
\begin{eqnarray}
\eta^{(\theta)}_\xi(\rho, m) -\eta_{\xi}(\rho, m)
&=& \frac{\int_{-1}^1
	\rho e^{\frac{\xi}{1-\xi^2}\frac{m}{\rho}}\big(e^{\frac{\xi}{1-\xi^2} \frac{\rho^{\theta}-1}{\theta}\tau}-\rho^{\frac{\xi^2}{1-\xi^2}}\big)
	e^{\frac{\xi\tau}{\theta(1-\xi^2)}}[1-\tau^2]_+^{\frac{1-\theta}{2\theta}}\dd\tau}
  {\int_{-1}^1 e^{\frac{\xi\tau}{\theta(1-\xi^2)}}[1-\tau^2]_+^{\frac{1-\theta}{2\theta}}\dd\tau} \nonumber\\
&=:& \langle \nu_\xi^{(\theta)}(\cdot),\, f_\xi(\cdot)\rangle\nonumber\\
&=& \langle \nu_\xi^{(\theta)}(\cdot),\, f_\xi(\cdot)-f_\xi(\xi)\rangle  + f_\xi(\xi), \label{5.5a}
\end{eqnarray}
where
\begin{equation}\label{def-f}
f_\xi(\tau)
=\rho e^{\frac{\xi}{1-\xi^2}\frac{m}{\rho}}\Big(e^{\frac{\xi}{1-\xi^2} \frac{\rho^{\theta}-1}{\theta}\tau}-\rho^{\frac{\xi^2}{1-\xi^2}}\Big)
\end{equation}
is a $C^1$--function in $\tau\in [-1,1]$, and
\begin{equation*}
\nu_\xi^{(\theta)}(\tau)=  \frac{e^{\frac{\xi\tau}{\theta(1-\xi^2)}}[1-\tau^2]_+^{\frac{1-\theta}{2\theta}}}
  {\int_{-1}^1 e^{\frac{\xi\tau}{\theta(1-\xi^2)}}[1-\tau^2]_+^{\frac{1-\theta}{2\theta}}\dd\tau}
\end{equation*}	
may be regarded as a probability measure over $C([-1,1])$ for each fixed $\theta>0$.

\medskip
2. {\it Claim} 1: For any $C^1$--function $f(\tau)$ and
$\theta\in (0,\theta_0]$,
\begin{equation*}
|\langle\nu^{(\theta)}_\xi, f(\cdot)-f(\xi)\rangle|\leq C\|f'\|_{L^\infty} \sqrt{\theta}
\qquad\,\,\mbox{for any
$\xi\in (-\sqrt{2}+1,\sqrt{2}-1)$},
\end{equation*}
where $C>0$ is a constant independent of
$\theta\in (0, \theta_0]$,
$f\in C^1([-1,1])$, and
$\xi\in (-\sqrt{2}+1,\sqrt{2}-1)$.

The claim can be shown as follows:
Notice that
\begin{eqnarray}\label{5.13}
|\langle\nu^{(\theta)}_\xi, f(\cdot)-f(\xi)\rangle|
&=&\left|\frac{ \int_{\R}(f(\tau)-f(\xi)) e^{\frac{\xi}{\theta(1-\xi^2)}\tau}[1-\tau^2]_+^{\frac{1}{2\theta}}[1-\tau^2]_+^{-\frac{1}{2}}
\dd\tau}{\int_{\R}e^{\frac{\xi}{\theta(1-\xi^2)}\tau}[1-\tau^2]_+^{\frac{1}{2\theta}}[1-\tau^2]_+^{-\frac{1}{2}}\dd\tau}\right|\nonumber\\
&\le& \frac{ \int_{\R}|f(\tau)-f(\xi)|e^{\frac{\xi}{1-\xi^2}\tau}\big(e^{\frac{\xi}{1-\xi^2}\tau}
	[1-\tau^2]_+^{\frac{1}{2}}\big)^{\frac{1-\theta}{\theta}}
	\big(e^{\frac{\xi}{1-\xi^2}\xi}(1-\xi^2)^{\frac{1}{2}}\big)^{-\frac{1-\theta}{\theta}}\dd\tau}
{\int_{\R}e^{\frac{\xi}{1-\xi^2}\tau}\big(e^{\frac{\xi}{1-\xi^2}\tau}[1-\tau^2]_+^{\frac{1}{2}}\big)^{\frac{1-\theta}{\theta}}
  \big(e^{\frac{\xi}{1-\xi^2}\xi}(1-\xi^2)^{\frac{1}{2}}\big)^{-\frac{1-\theta}{\theta}}\dd\tau}\nonumber\\
&=&\frac{ \int_{-1}^{1}|f(\tau)-f(\xi)|e^{\frac{\xi}{1-\xi^2}\tau}e^{\frac{g(\tau)-g(\xi)}{h(\theta)}}\dd\tau}
{\int_{-1}^{1}e^{\frac{\xi}{1-\xi^2}\tau}e^{\frac{g(\tau)-g(\xi)}{h(\theta)}}\dd\tau},
\end{eqnarray}
where $h(\theta)=\frac{\theta}{1-\theta}>0$, and $g(\tau)=\frac{\xi}{1-\xi^2}\tau+\frac{1}{2}\ln(1-\tau^2)$
with $g'(\xi)=0$ and $g''(\tau)=-\frac{1+\tau^2}{(1-\tau^2)^2}$
which is singular near $\tau=\pm1$.
Again, using the Taylor expansion, we have
\begin{equation*}
g(\tau)-g(\xi)=\frac12g''(\tilde{\tau})(\tau-\xi)^2=-\frac{1+{\tilde{\tau}}^2}{2(1-{\tilde{\tau}}^2)^2}(\tau-\xi)^2\le -\frac12(\tau-\xi)^2
\end{equation*}
for some $\tilde{\tau}$ between $\xi$ and $\tau$.

On the other hand, when $\tau\in (-\frac12,\frac12)$, $\tilde{\tau}$ must be in $(-\frac12,\frac12)$ for any
$\xi\in (-\sqrt{2}+1,\sqrt{2}-1)$.
Then we have
\begin{equation}\label{5.14a}
g(\tau)-g(\xi)\ge -\frac{1+\frac14}{2(1-\frac14)^2}(\tau-\xi)^2=-\frac{10}{9}(\tau-\xi)^2 \qquad\,\,\, \mbox{if $\tau\in (-\frac12,\frac12)$}.
\end{equation}
Thus, we see from \eqref{5.13}--\eqref{5.14a} that
\begin{eqnarray*}
|\langle\nu^{(\theta)}_\xi, f(\cdot)-f(\xi)\rangle|
&\leq&
\frac{e^{\frac{2|\xi|}{1-\xi^2}} \int_{-1}^{1}|f(\tau)-f(\xi)| e^{-\frac{(\xi-\tau)^2}{2h(\theta)}}\dd\tau}
{\int_{-\frac{1}{2}}^{\frac{1}{2}}e^{-\frac{10(\xi-\tau)^2}{9h(\theta)}}\dd\tau}\nonumber\\
&\leq &  \frac{\int_{-\frac{1+\xi}{\sqrt{h(\theta)}}}^{\frac{1-\xi}{\sqrt{h(\theta)}}}|f(\xi+\sqrt{h(\theta)}s)-f(\xi)| e^{-\frac12s^2}\dd s}
{\int_{-\frac{1+2\xi}{2\sqrt{h(\theta)}}}^{\frac{1-2\xi}{2\sqrt{h(\theta)}}}e^{-\frac{10}{9}s^2}\dd s}\nonumber\\
&\leq & C \|f'\|_{L^\infty}\sqrt{h(\theta)}\frac{ \int_{-\infty}^{\infty} |s|e^{-\frac12s^2}\dd s}
{\int_{-\frac{3}{2}+\sqrt{2}}^{\frac{3}{2}-\sqrt{2}}e^{-\frac{10}{9}s^2}\dd s}
\leq C\|f'\|_{L^\infty} \sqrt{\theta}.\nonumber
\end{eqnarray*}
We conclude the proof of Claim 1.

\medskip
3. {\it Claim} 2:
For any interval $[a,b]\Subset (-\sqrt{2}+1,\sqrt{2}-1)$, there exists $C=C(w_0; a,b)>0$
independent of $\theta\in (0,\theta_0]$ such that, for any $(\rho, m)$ satisfying \eqref{uniformboumd},
 $f_\xi(\tau)$ defined in \eqref{def-f} satisfies that
$$
|f'_\xi (\tau)|\le C(w_0; a,b) \qquad\mbox{for any $\xi \in [a,b]$ and $\tau\in [-1,1]$}.
$$

\smallskip
This can be seen as follows:
First, we have
$$
f_\xi'(\tau)=\frac{\xi}{1-\xi^2}\frac{\rho^\theta -1}{\theta} \rho e^{\frac{\xi}{1-\xi^2}(\frac{m}{\rho}+\frac{\rho^\theta-1}{\theta}\tau)}.
$$

We consider two cases:

\smallskip
Case 1: $\rho\ge 1$.  Then
\begin{equation}\label{5.8a}
|f'_\xi(\tau)|
\le  \frac{|\xi|}{1-\xi^2}\frac{\rho^\theta-1}{\theta} \rho e^{\frac{|\xi|}{1-\xi^2}(|\frac{m}{\rho}|+\frac{\rho^\theta-1}{\theta})}
\le  \frac{|\xi|}{1-\xi^2}w_0e^{w_0} e^{\frac{|\xi|}{1-\xi^2}w_0}
\le  \frac{w_0}{2}e^{\frac{3w_0}{2}},
\end{equation}
where we have used that
\begin{equation}\label{5.8b}
\frac{|\xi|}{1-\xi^2}\le \frac{1}{2}, \quad \rho^\theta-1\le \theta w_0, \quad \rho\le e^{w_0},
\quad \frac{|m|}{\rho}+\frac{\rho^\theta-1}{\theta}\le w_0.
\end{equation}

\medskip
Case 2: $\rho\in [0,1)$.  Then, for any $\xi\in [a,b]\Subset (-\sqrt{2}+1,\sqrt{2}-1)$,
\begin{equation}\label{5.8c}
|f'_\xi(\tau)|
\le \frac{1-\rho^\theta}{2\theta} \rho
  e^{\frac{|\xi|}{1-\xi^2}(|\frac{m}{\rho}|+\frac{1-\rho^\theta}{\theta})}\\
\le  \frac{1}{2}|\ln \rho| \rho e^{\frac{|\xi|}{1-\xi^2}(w_0-2\ln \rho)}\\
\le  \frac{1}{2}e^{\frac{w_0}{2}} \rho^{1-{\frac{2|\xi|}{1-\xi^2}}}|\ln \rho|\\
\le  C(w_0;a,b)
\end{equation}
for $C(w_0;a,b)>0$ depending only on $w_0$ and $(a,b)$, but independent of $\theta\in (0, \theta_0]$,
where we have used \eqref{5.8b}
and
$1-{\frac{2|\xi|}{1-\xi^2}}\ge \delta(a,b)>0$ for some $\delta(a,b)$ depending on $(a, b)$.

Therefore, Claim 2 follows by combining estimates \eqref{5.8a}--\eqref{5.8c} together.

\smallskip
4. {\it Claim} 3:
For any $(\rho, m)$ satisfying \eqref{uniformboumd},
\begin{equation}\label{rateofI1}
|f_\xi(\xi)|\leq C\theta  \qquad \mbox{for any
$\xi\in (-\sqrt{2}+1,\sqrt{2}-1)$},
\end{equation}
where $C:=C(w_0)>0$ is a constant depending only on $w_0$, but independent of
$\theta\in (0,\theta_0]$ and
$\xi\in (-\sqrt{2}+1,\sqrt{2}-1)$.

This can be seen as follows:
Similar to Lemma \ref{entropyuniform}, we have
\begin{equation*}
|f_\xi(\xi)|
\leq (\rho e^{\frac{|m|}{\rho}})^{\frac{|\xi|}{1-\xi^2}}\rho^{1-\frac{|\xi|}{1-\xi^2}}
\Big|e^{\frac{\xi^2}{1-\xi^2} \frac{\rho^{\theta}-1}{\theta}}-\rho^{\frac{\xi^2}{1-\xi^2}}
	\Big|
\leq e^{\frac{|\xi|}{1-\xi^2}w_0}\rho^{1-\frac{|\xi|}{1-\xi^2}}
\Big|e^{\frac{\xi^2}{1-\xi^2} \frac{\rho^{\theta}-1}{\theta}}-\rho^{\frac{\xi^2}{1-\xi^2}}
	\Big|.
\end{equation*}
Then statement \eqref{rateofI1} is clear when $\rho=0$.
When $\rho>0$, we use that
$\lim\limits_{\theta\rightarrow0} e^{\beta\frac{\rho^{\theta}-1}{\theta}}=\rho^{\beta}$
for any $\beta$ to obtain
\begin{equation*}
 e^{\frac{\xi^2}{1-\xi^2} \frac{\rho^{\theta}-1}{\theta}}-\rho^{\frac{\xi^2}{1-\xi^2}}
 =\int_0^\theta \frac{\dd }{\dd s}\Big(e^{\frac{\xi^2}{1-\xi^2} \frac{\rho^{s}-1}{s}}\Big) \dd s
 =\frac{\xi^2}{1-\xi^2}\int_0^\theta e^{\frac{\xi^2}{1-\xi^2} \frac{\rho^{s}-1}{s}}
  \frac{\dd}{\dd s}\Big(\frac{\rho^s-1}{s}\Big)\,\dd s.
\end{equation*}
Then
\begin{eqnarray}
|f_{\xi}(\xi)|
&\leq&	C \frac{|\xi|^2}{1-\xi^2}
	\rho^{1-\frac{|\xi|}{1-\xi^2}} \left| \int_0^\theta e^{\frac{\xi^2}{1-\xi^2} \frac{\rho^{s}-1}{s}} \frac{\dd}{\dd s} \Big(\frac{\rho^s-1}{s}\Big)\,\dd s
	\right|\nonumber\\
	&\leq&	C
	\rho^{1-\frac{|\xi|}{1-\xi^2}} \max_{s\in(0, \theta)}\Big(e^{\frac{\xi^2}{1-\xi^2} \frac{\rho^{s}-1}{s}}\Big) 	
	\int_0^\theta \left|\frac{\dd}{\dd s} \Big(\frac{\rho^s-1}{s}\Big)\right|\,\dd s
	\nonumber\\
	&\leq&	C e^{\frac{\xi^2}{1-\xi^2}w_0}
	\rho^{1-\frac{|\xi|}{1-\xi^2}}\frac{\rho^\theta-1-\theta \ln \rho}{\theta}
    \nonumber\\
	&\leq&	C  \rho^{1-\frac{|\xi|}{1-\xi^2}}\frac{\rho^\theta-1-\theta \ln \rho}{\theta},
    \nonumber
    \end{eqnarray}
since $\rho^\theta-1-\theta \ln \rho\geq0 $,
where we have used the fact that
$$
\frac{\dd}{\dd s} \Big(\frac{\rho^s-1}{s}\Big)
=\frac{1}{s^2}\big(s \rho^s \ln \rho-\rho^s+1\big)
=\frac{1}{s^2}\big(\rho^s \ln \rho^s-\rho^s+1\big)\geq 0 \qquad\mbox{for $s\in (0,\theta]$}.
$$

On the other hand,  the Taylor expansion with respect to $\theta$ gives
\begin{equation}\label{LogConergence}
\rho^{1-\frac{|\xi|}{1-\xi^2}}\big(\rho^\theta-1-\theta \ln \rho\big)
=\frac{1}{2}\rho^{1-\frac{|\xi|}{1-\xi^2}}(\ln \rho)^2{\tilde{\theta}}^2\le C\theta^2
\qquad\,\,\mbox{for some $\tilde{\theta}\in(0, \theta)$}.
\end{equation}
Then we conclude Claim 3.

\smallskip
5. Combining Claims 1--3 above together, we see from \eqref{5.5a} that
\begin{eqnarray*}
\big|\eta^{(\theta)}_\xi(\rho, m) -\eta_{\xi}(\rho, m)\big|
&\le&  |\langle \nu_\xi^{(\theta)}(\tau),\, f_\xi(\tau)-f_\xi(\xi)\rangle|  + |f_\xi(\xi)|\nonumber\\
&\le& C\|f'\|_{L^\infty}\sqrt{\theta}+C\theta
\le C\sqrt{\theta}.
\end{eqnarray*}

The proof for the estimate that $|q^{(\theta)}_{\xi}(\rho^{(\theta)}, m^{(\theta)})-q_{\xi}(\rho^{(\theta)}, m^{(\theta)})|
\le  C\sqrt{\theta}$
follows the same argument.
This completes the proof.
\end{proof}

By the similar computation to \eqref{LogConergence},
we obtain
\begin{equation}\label{EnergyConvergenceRate}
\big|\eta^{(\theta)}_{*}(\rho^{(\theta)}, m^{(\theta)})-\eta^{(0)}_{*}(\rho^{(\theta)}, m^{(\theta)})\big|
\le  C\theta, \quad\,
\big|q^{(\theta)}_{*}(\rho^{(\theta)}, m^{(\theta)})-q^{(0)}_{*}(\rho^{(\theta)}, m^{(\theta)})\big|
\le  C\theta.
\end{equation}

\begin{remark}
From this lemma,
we can see that the restriction that
$\xi\in (-\sqrt{2}+1,\sqrt{2}-1)$
is natural
for Claims {\rm 1--2} in Steps {\rm 2--3} in the proof above.	
\end{remark}

\section{Proof of the Main Theorem }

We divide the proof of the main theorem, Theorem \ref{main}, into three steps.

\smallskip
1. For any
$\theta>0$,
assume that $(\rho^{(\theta)}, m^{(\theta)})$ is the corresponding entropy solution of \eqref{E-1},
constructed in Theorem \ref{ExistenceTheta1}.
It follows from \eqref{uniformboumd} that the solution sequence $(\rho^{(\theta)}, m^{(\theta)})$ is
uniformly bounded with respect to
$\theta\in (0,\theta_0]$,
which satisfies condition \eqref{Linftycondition} in Theorem \ref{1framwork}.

We now show that the solution sequence $(\rho^{(\theta)}, m^{(\theta)})$ satisfies
condition \eqref{H-1condition} in Theorem \ref{1framwork} for any $\delta\in (0, \sqrt{2}-1)$.
Notice that
\begin{eqnarray}\label{finalE}
\partial_t\eta_{\xi}(\rho^{(\theta)}, m^{(\theta)})+\partial_xq_{\xi}(\rho^{(\theta)}, m^{(\theta)})
=: J_1^{(\theta)}+J_2^{(\theta)},
\end{eqnarray}
with
\begin{align*}
&J_1^{(\theta)}=\partial_t\big(\eta_{\xi}(\rho^{(\theta)}, m^{(\theta)})-\eta^{(\theta)}_{\xi}(\rho^{(\theta)}, m^{(\theta)})\big)
 +\partial_x\big(q_{\xi}(\rho^{(\theta)}, m^{(\theta)})-q^{(\theta)}_{\xi}(\rho^{(\theta)}, m^{(\theta)})\big),\\
&J_2^{(\theta)}=\int_{-w_0}^{w_0}(\psi_{\xi}^{(\theta)})''(s)\,D^{(\theta)}(t,x;s)\,\dd s.
\end{align*}
By Proposition \ref{lemma5.3}, we have
$$
\|J_1^{(\theta)}\|_{H^{-1}(K)}\le C_K\sqrt{\theta}\to 0 \qquad \mbox{as $\theta\to 0$}
$$
for any subset $K\Subset \mathbb{R}_+^2$. This implies that $J_1^{(\theta)}$ is compact in $H^{-1}_{\rm loc}$.

Notice that $\mbox{supp}\, D^{(\theta)}(t, x;\,\cdot)\subset [-w_0, w_0]$. By \eqref{5.3},
we see that, for $s\in [-w_0, w_0]$,
 \begin{equation*}
 (\psi_{\xi}^{(\theta)})''
 =\frac{\xi^2}{(1-\xi^2)^2}\frac{e^{\frac{\xi}{1-\xi^2}s}\int_{-1}^{1}(1-\tau^2)^{\frac{1-\theta}{2\theta}}\,
  \dd\tau}{\int_{-1}^{1}e^{\frac{\xi\tau}{\theta(1-\xi^2)}}(1-\tau^2)^{\frac{1-\theta}{2\theta}}\dd\tau}
  \le \frac{2\xi^2}{(1-\xi^2)^2}e^{\frac{|\xi|w_0}{1-\xi^2}}\le C,
 \end{equation*}
where we have used the fact that $e^{\frac{\xi\tau}{\theta(1-\xi^2)}}\ge 1$ when $\xi\tau\ge 0$.
Thus, $J_2^{(\theta)}$ is uniformly bounded as a Radon measure sequence,
which implies that $J_2^{(\theta)}$ is compact in $W^{-1, q}_{\rm loc}$ for $q\in (1,2)$.

Combining the compactness of both $J_1^{(\theta)}$ and $J_2^{(\theta)}$ above together,
we conclude that
$$
J^{(\theta)}_1+J^{(\theta)}_2 \qquad\, \mbox{is compact in $W^{-1, q}_{\rm loc}$ for $q\in (1,2)$}.
$$
On the other hand,
$$
J^{(\theta)}_1+J^{(\theta)}_2
 =\partial_t\eta_{\xi}(\rho^{(\theta)}, m^{(\theta)})+\partial_xq_{\xi}(\rho^{(\theta)}, m^{(\theta)})
 \qquad\mbox{is uniformly bounded in $W^{-1,\infty}_{\rm loc}$}.
$$
Then the interpolation embedding compactness ({\it cf}. \cite{DCL}) implies that  $J^{(\theta)}_1+J^{(\theta)}_2$
is the $H^{-1}_{\rm loc}$--compact.
Therefore, the solution sequence $(\rho^{(\theta)}, m^{(\theta)})$ satisfies condition \eqref{H-1condition}.

We now employ Theorem \ref{1framwork} to conclude that $(\rho^{(\theta)}, m^{(\theta)})(t,x)$
(extracting a subsequence if necessary)
strongly converges to a function $(\rho, m)(t,x)\in L^\infty$ {\it a.e.} $(t,x)\in \mathbb{R}_+^2$.
From the uniform estimates in \S 3,  we conclude
\begin{equation*}
0\leq\rho(t,x)\leq e^{w_0}, \ \ |m(t,x)|\le \rho(t,x)\big(|\ln \rho(t,x)|+ w_0\big)
\qquad \mbox{{\it a.e.} $(t,x)\in \mathbb{R}_+^2$}.
\end{equation*}

\smallskip
2. We next prove that $(\rho, m)(t,x)$ satisfies the energy inequality.
For any fixed
$\theta>0$,
$(\rho^{(\theta)}, m^{(\theta)})$ satisfies
\begin{equation}\label{theta-energy-inequ-b}
\partial_t {\eta}^{(\theta)}_*(\rho^{(\theta)}, m^{(\theta)})+\partial_x{q}^{(\theta)}_*(\rho^{(\theta)}, m^{(\theta)})
\leq0
\end{equation}
in the sense of distributions.
Notice that \eqref{EnergyConvergenceRate} holds
for any $(\rho, m)$ satisfying \eqref{uniformboumd}.
Taking $\theta\rightarrow0$ in \eqref{theta-energy-inequ-b}, we have
\begin{equation*}
\partial_t \eta^{(0)}_*(\rho, m)+\partial_x q^{(0)}_*(\rho, m)\leq 0
\end{equation*}
in the sense of distributions.
This shows that $(\rho, m)$ is an entropy solutions of system \eqref{E-1} with $\theta=0$ in the sense of Definition 2.1.

\smallskip
3. For
$\xi\in (-\sqrt{2}+1,\sqrt{2}-1)$,
there exists a non-positive measure $D(t, x; \xi)$ such that
\begin{equation*}
\int_{\mathbb{R}}(\psi_{\xi}^{(\theta)})''(s)\,D^{(\theta)}(t,x;s)\,\dd s\,\rightharpoonup\, D(t, x; \xi) \qquad \mbox{in Radon measures as $\theta\rightarrow0$}.
\end{equation*}
Taking the weak limit in \eqref{finalE}, we conclude that, for each
 $\xi\in (-\sqrt{2}+1,\sqrt{2}-1)$,
\begin{equation*}
\partial_t\eta_{\xi}(\rho, m)+\partial_xq_{\xi}(\rho, m)= D(t, x;\xi)\leq 0
\end{equation*}
in the sense of distributions.
This implies that $(\rho, m)$ satisfies the entropy inequality \eqref{EntropyInequality} with entropy pairs $(\eta_{\xi}, q_{\xi})$
in the sense of distributions.

 In particular, for any nonnegative $\psi(\xi)\in L^\infty(\mathbb{R})$ with ${\rm supp}\, \psi(\xi)\Subset [-\sqrt{2}+1,\sqrt{2}-1]$,
 $(\rho, m)$ satisfies
 $$
 \partial_t \eta^{(0)}(\rho,m; \psi) +\partial_x q^{(0)}(\rho,m; \psi)\le 0
 $$
 in the sense of distributions.

This completes the proof of Theorem \ref{main}.

\section{Convergence of Riemann Solutions Containing the Vacuum States }

In this section, we show the pointwise convergence of Riemann solutions,
which includes the vanishing vacuum states between the two rarefaction waves and
the limit of one-side vacuum states.
Consider the Riemann solutions of system \eqref{E-1} for $\theta\geq 0$ with initial data:
\begin{equation*}
(\rho_0, \frac{m_0}{\rho_0})
=\begin{cases}
(\rho_{L}, u_L)\qquad \mbox{for $x<0$},\\[1mm]
(\rho_{R}, u_R)\qquad \mbox{for $x>0$}.
\end{cases}
\end{equation*}

\begin{lemma}  For $\theta>0$, the shock curves are
\begin{eqnarray}
&& S^{\left(\theta\right)}_{1}: \,\,\,
u_{R}= u_{L}-\sqrt{\Big(\frac{1}{\rho_{L}}-\frac{1}{\rho_{R}}\Big)
 \frac{\rho^{2\theta+1}_{R}-\rho^{2\theta+1}_{L}}{2\theta+1}} \qquad\,\, \mbox{for $u_{L}> u_{R}$ and $\rho_{L}<\rho_{R}$},\label{S1T}\\[1mm]
&& S^{\left(\theta\right)}_{2}: \,\,\, u_{R}= u_{L}-\sqrt{\Big(\frac{1}{\rho_{L}}-\frac{1}{\rho_{R}}\Big)
  \frac{\rho^{2\theta+1}_{R}-\rho^{2\theta+1}_{L}}{2\theta+1}}\qquad\,\, \mbox{for $u_{L}> u_{R}$ and $\rho_{L}>\rho_{R}$},\label{S2T}
\end{eqnarray}
and the respective solutions of shocks $S^{(\theta)}_{j}$, $j=1,2$,
are
\begin{equation*}
(\rho^{(\theta)}, u^{(\theta)})
=\begin{cases}
(\rho_{L}, u_L)\qquad \mbox{for $x<\sigma t$},\\[1mm]
(\rho_{R}, u_R)\qquad \mbox{for $x>\sigma t$},
\end{cases}
\end{equation*}
with shock speed $\sigma:=\frac{[\rho u]}{[\rho]}=\frac{\rho_R u_R-\rho_L u_L}{\rho_R-\rho_L}$.
Correspondingly, the rarefaction curves are
\begin{eqnarray}
&&R^{\left(\theta\right)}_{1}: \,\,\,
  w^{(\theta)}_1(\rho_R, u_R )= w^{(\theta)}_1(\rho_L, u_L )=:w^{(\theta)}_1
   \qquad\,\, \mbox{for $u_{L}< u_{R}$ and $\rho_{L}>\rho_{R}$},\label{R1T}\\
&&R^{\left(\theta\right)}_{2}: \,\,\,
  w^{(\theta)}_2(\rho_R, u_R )= w^{(\theta)}_2(\rho_L, u_L )=:w^{(\theta)}_2
   \qquad\,\, \mbox{for $u_{L}< u_{R}$ and $\rho_{L}<\rho_{R}$},\label{R2T}
\end{eqnarray}
where
the respective solutions of rarefaction waves $R^{(\theta)}_j$, $j=1,2$, are
\begin{equation*}
(\rho^{(\theta)}, u^{(\theta)})
=\begin{cases}
(\rho_{L}, u_L)\qquad\, &\mbox{for $x\leq\lambda^{(\theta)}_j(\rho_L, u_L)t$},\\[1mm]
( (\frac{(-1)^j\theta(\frac{x}{t}-w^{(\theta)}_j)}{\theta+1})^{\frac{1}{\theta}},
  \frac{\frac{x}{t}+\theta w^{(\theta)}_j}{\theta+1})\qquad\, &\mbox{for $\lambda^{(\theta)}_j(\rho_L, u_L)t< x <\lambda^{(\theta)}_j(\rho_R, u_R)t$},\\[1mm]
(\rho_{R}, u_R)\qquad\, &\mbox{for $x\geq \lambda^{(\theta)}_j(\rho_R, u_R)t$},
\end{cases}
\end{equation*}
where  $\lambda^{(\theta)}_j(\rho, u)=u+(-1)^j\rho^{\theta}$, $j=1, 2$, are two eigenvalues.

\smallskip
For $\theta=0$, the shock curves are
\begin{eqnarray}
&&S^{\left(0\right)}_{1}: \,\,\,
  u_{R}= u_{L}-\sqrt{\Big(\frac{1}{\rho_{L}}-\frac{1}{\rho_{R}}\Big)\left(\rho_{R}-\rho_{L}\right)}
  \qquad\,\, \mbox{for $u_{L}> u_{R}$ and $\rho_{L}<\rho_{R}$},\label{S10}\\[1mm]
&&S^{\left(0\right)}_{2}: \,\,\,
 u_{R}= u_{L}-\sqrt{\Big(\frac{1}{\rho_{L}}-\frac{1}{\rho_{R}}\Big)\left(\rho_{R}-\rho_{L}\right)}
 \qquad\,\, \mbox{for $u_{L}> u_{R}$ and $\rho_{L}>\rho_{R}$},\label{S20}
\end{eqnarray}
and the respective solutions of shocks $S^{\left(0\right)}_{j}$, $j=1,2$,
are
\begin{equation*}
(\rho^{(0)}, u^{(0)})
=\begin{cases}
(\rho_{L}, u_L)\qquad \mbox{for $x<\sigma t$},\\[1mm]
(\rho_{R}, u_R)\qquad \mbox{for $x>\sigma t$},
\end{cases}
\end{equation*}
with shock speed $\sigma:=\frac{[\rho u]}{[\rho]}=\frac{\rho_R u_R-\rho_L u_L}{\rho_R-\rho_L}$.
Correspondingly, the rarefaction curves are
\begin{eqnarray}
&&R^{\left(0\right)}_{1}: \,\,\,
  w^{(0)}_1(\rho_R, u_R )=w^{(0)}_1(\rho_L, u_L )=:w^{(0)}_1\qquad\,\mbox{for $u_{L}< u_{R}$ and $\rho_{L}>\rho_{R}$},\label{R10}\\[1mm]
&&R^{\left(0\right)}_{2}: \,\,\, w^{(0)}_2(\rho_R, u_R )=w^{(0)}_2(\rho_L, u_L )=:w^{(0)}_2
 \qquad\,\mbox{for $u_{L}< u_{R}$ and $\rho_{L}<\rho_{R}$},\label{R20}
\end{eqnarray}
where
the respective solutions of rarefaction waves $R^{(\theta)}_j$  are{\rm :}
\begin{equation*}
(\rho^{(0)}, u^{(0)})
=\begin{cases}
(\rho_{L}, u_L)\qquad\, &\mbox{for $x\leq\lambda^{(0)}_j(\rho_L, u_L)t$},\\[1mm]
(w^{(0)}_j e^{(-1)^j\frac{x}{t}-1}, \frac{x}{t}-(-1)^j)\qquad\, &\mbox{for $\lambda^{(0)}_j(\rho_L, u_L)t< x <\lambda^{(0)}_j(\rho_R, u_R)t$},\\[1mm]
(\rho_{R}, u_R)\qquad\, &\mbox{for $x\geq \lambda^{(0)}_j(\rho_R, u_R)t$},
	\end{cases}
\end{equation*}
where $\lambda^{(0)}_j(\rho, u)=u+(-1)^j$, $j=1, 2$, are two eignevalues.
\end{lemma}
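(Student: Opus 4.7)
The plan is to treat both cases $\theta>0$ and $\theta=0$ by the classical Lax construction of Riemann solutions for $2\times 2$ genuinely nonlinear hyperbolic systems, applied directly to \eqref{E-1}. Because the pressure laws $p^{(\theta)}(\rho)=\rho^{2\theta+1}/(2\theta+1)$ and $p^{(0)}(\rho)=\rho$ are explicit, every step reduces to an elementary algebraic computation, and the two cases differ only in the pressure jumps and in the sound speeds $c^{(\theta)}(\rho)=\rho^\theta$ and $c^{(0)}\equiv 1$.

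I would begin by diagonalizing $\nabla F^{(\theta)}(U)$: its eigenvalues are $\lambda_j^{(\theta)}(\rho,u)=u+(-1)^j c^{(\theta)}(\rho)$, $j=1,2$, and the Riemann invariants $w_j^{(\theta)}$ recorded in \S 2.1--2.2 are preserved by the corresponding right eigenvectors (a one-line differentiation in each of the four pairs $(j,\theta)$). Both characteristic fields are genuinely nonlinear on $\{\rho>0\}$, which legitimizes the separate parametrizations of admissible shocks and of rarefactions used below.

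For the shock curves, the Rankine--Hugoniot conditions
\begin{equation*}
\sigma[\rho]=[\rho u], \qquad \sigma[\rho u]=[\rho u^2+p^{(\theta)}]
\end{equation*}
immediately give $\sigma=(\rho_R u_R-\rho_L u_L)/(\rho_R-\rho_L)$, and eliminating $\sigma$ between the two jump relations produces the Hugoniot locus
\begin{equation*}
(u_R-u_L)^2=\Big(\frac{1}{\rho_L}-\frac{1}{\rho_R}\Big)\big(p^{(\theta)}(\rho_R)-p^{(\theta)}(\rho_L)\big).
\end{equation*}
Substituting the two pressure laws yields the square-root formulas in \eqref{S1T}--\eqref{S2T} and \eqref{S10}--\eqref{S20}. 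The sign $u_R<u_L$ and the distinction between $S_1^{(\theta)}$ (with $\rho_L<\rho_R$) and $S_2^{(\theta)}$ (with $\rho_L>\rho_R$) are then fixed by imposing the Lax entropy condition $\lambda_j^{(\theta)}(U_R)<\sigma<\lambda_j^{(\theta)}(U_L)$; a direct monotonicity check using the explicit $\lambda_j^{(\theta)}$ shows that the compressive inequality holds exactly under the stated density orderings.

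For the rarefaction waves, I would look for self-similar solutions of \eqref{E-1} depending on $\zeta=x/t$ along which the corresponding Riemann invariant is constant. Setting $\zeta=\lambda_j^{(\theta)}(\rho,u)$ together with $w_j^{(\theta)}(\rho,u)=w_j^{(\theta)}(\rho_L,u_L)$ gives, for $\theta>0$, two linear relations in $(u,\rho^\theta)$ that are solved explicitly to yield
\begin{equation*}
\rho^\theta=\frac{(-1)^j\theta(\zeta-w_j^{(\theta)})}{\theta+1}, \qquad u=\frac{\zeta+\theta w_j^{(\theta)}}{\theta+1},
\end{equation*}
which matches the formulas in the lemma; the corresponding isothermal expression follows either by the same procedure using $\ln w_j^{(0)}=\ln\rho+(-1)^{j+1}u$ and $\lambda_j^{(0)}=u+(-1)^j$, or formally as the $\theta\to 0$ limit of the isentropic expression. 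The orientation conditions $u_L<u_R$ together with $\rho_L>\rho_R$ (for $R_1^{(\theta)}$) and $\rho_L<\rho_R$ (for $R_2^{(\theta)}$) in \eqref{R1T}--\eqref{R2T} and \eqref{R10}--\eqref{R20} come from requiring $\lambda_j^{(\theta)}$ to increase along the integral curve of the relevant right eigenvector, i.e. from the fan structure $\lambda_j^{(\theta)}(U_L)\le\zeta\le\lambda_j^{(\theta)}(U_R)$. The only genuine bookkeeping obstacle in the whole argument is keeping these signs consistent across $j=1,2$ and across the two regimes $\theta>0$ and $\theta=0$; once that is settled, every formula in the lemma follows by direct algebraic verification.
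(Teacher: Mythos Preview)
Your outline is correct and is precisely the classical Lax construction that underlies the lemma; the Rankine--Hugoniot elimination, the Lax admissibility check that fixes the signs and density orderings, and the self-similar computation for the rarefaction fans are all as you describe. The paper itself does not supply a proof of this lemma: it is stated as a catalogue of standard formulas (the surrounding text cites \cite{Dafermos} for the Riemann phase-plane analysis), so there is nothing to compare against beyond noting that your derivation is exactly the textbook one the authors are taking for granted.
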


From \eqref{S1T}--\eqref{S2T} and \eqref{S10}--\eqref{S20}, if $\rho_{L}=0$ or $\rho_{R}=0$,
then $\rho_{L}=\rho_{R}=0$, which indicates that,
for $\theta\geq0$, shock waves can not connect with a vacuum state.
However, for $\theta\geq 0$,
\eqref{R1T} and \eqref{R10} show that the right-states of $R^{(\theta)}_{1}$ and $R^{(0)}_{1}$
may be a vacuum state $\rho_{R}=0$, while \eqref{R2T} and \eqref{R20} show the left-states
of $R^{(\theta)}_{2}$ and $R^{(0)}_{2}$ may also be a vacuum state $\rho_{L}=0$.
Based on the above analysis, we now separate our discussion to the two-side non-vacuum case
and the one-side vacuum case.
For the two-side non-vacuum case, we have

\begin{proposition}
For fixed
$(\rho_{L}, u_{L})$ and $(\rho_{R}, u_{R})$ with $\rho_{L}>0$ and $\rho_{R}>0$,
there exists an unique Riemann solution $(\rho^{(\theta)}, u^{(\theta)})$ for $\theta\geq0$,
and  $(\rho^{(\theta)}, u^{(\theta)})\to (\rho^{(0)} ,u^{(0)})$ as $\theta\rightarrow 0$.
\end{proposition}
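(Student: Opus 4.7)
The plan is to adapt the standard wave-curve construction and study its limit as $\theta \to 0$. For each fixed $\theta \ge 0$, the Riemann solution is built by determining a middle state $(\rho_M^{(\theta)}, u_M^{(\theta)})$ as the unique intersection of the forward $1$-wave curve through $(\rho_L, u_L)$, obtained by gluing the rarefaction branch $R_1^{(\theta)}$ (parameterized by $\rho_M \le \rho_L$) to the shock branch $S_1^{(\theta)}$ (parameterized by $\rho_M \ge \rho_L$), with the backward $2$-wave curve through $(\rho_R, u_R)$, defined analogously using $R_2^{(\theta)}$ and $S_2^{(\theta)}$. Existence and uniqueness of the intersection for $\theta > 0$ is classical, and for $\theta = 0$ follows by the same monotonicity/orientation argument in the $(\rho, u)$-plane, since the isothermal curves in \eqref{S10}--\eqref{R20} are strictly monotone in $\rho$ and sweep out $u \in \mathbb{R}$ as $\rho$ ranges in $(0, \infty)$. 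For $\theta > 0$, vacuum would appear in the two-rarefaction pattern only when $u_R - u_L \ge (\rho_L^\theta + \rho_R^\theta)/\theta$, and the right-hand side tends to $+\infty$ as $\theta \to 0$, so no vacuum appears for all sufficiently small $\theta$; for $\theta = 0$ a direct computation gives, e.g., $\rho_M^{(0)} = \sqrt{\rho_L \rho_R}\, e^{(u_L - u_R)/2} > 0$ in the two-rarefaction pattern.

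The next step is to show that the forward $1$-wave and backward $2$-wave curves converge, uniformly on compact subsets of $\{\rho > 0\}$, to their $\theta = 0$ counterparts. For the shock branches, this is immediate from \eqref{S1T}--\eqref{S2T} and \eqref{S10}--\eqref{S20} using
\begin{equation*}
\frac{\rho_R^{2\theta+1} - \rho_L^{2\theta+1}}{2\theta+1} \to \rho_R - \rho_L \qquad \mbox{uniformly on compacts in $\{\rho > 0\}$}.
\end{equation*}
The rarefaction branches are more delicate because $w_j^{(\theta)} = u + (-1)^{j+1}\rho^\theta/\theta$ itself diverges as $\theta \to 0$; however, the equivalent invariant relation
\begin{equation*}
u - u_0 = (-1)^{j}\, \frac{\rho^\theta - \rho_0^\theta}{\theta}
\end{equation*}
involves only the uniformly convergent quantity $(\rho^\theta - \rho_0^\theta)/\theta \to \ln(\rho/\rho_0)$, which recovers exactly the isothermal relation defining $R_j^{(0)}$ in \eqref{R10}--\eqref{R20}.

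With both wave curves converging in $C^1_{\rm loc}(\{\rho > 0\})$ and meeting transversally (the eigenvalue gap $\lambda_2^{(\theta)} - \lambda_1^{(\theta)} = 2\rho^\theta$ is bounded below on compacts in $\{\rho > 0\}$ uniformly in small $\theta$), the implicit function theorem delivers $(\rho_M^{(\theta)}, u_M^{(\theta)}) \to (\rho_M^{(0)}, u_M^{(0)})$. Pointwise convergence of $(\rho^{(\theta)}, u^{(\theta)})(t,x)$ then reduces to analyzing each wave: shock pieces are piecewise constant with $\theta$-independent Rankine-Hugoniot speed $\sigma = [\rho u]/[\rho]$ and converging constant states, so convergence is immediate; for rarefaction fans one inserts $\rho^\theta = 1 + \theta \ln \rho + O(\theta^2)$ into the explicit self-similar formula and uses $(1 + \theta a)^{1/\theta} \to e^a$ to verify convergence to the isothermal self-similar formula uniformly on compact subsets of each open fan region.

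The main obstacle is precisely the formal singularity of the isentropic rarefaction data as $\theta \to 0$: the Riemann invariant $w_j^{(\theta)}$, the fan exponent $1/\theta$, and the quantity $\rho^\theta/\theta$ all blow up individually, so one cannot pass to the limit term by term. The remedy, in the spirit of Proposition \ref{lemma5.3}, is to rewrite every formally divergent quantity as a difference of the form $(\rho^\theta - \rho_0^\theta)/\theta$ before letting $\theta \to 0$; with this reformulation, the remaining convergence statements reduce to elementary Taylor expansions of $\rho \mapsto \rho^\theta$ around $\theta = 0$, of the kind already used in \S 5.
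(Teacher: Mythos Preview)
Your proposal is correct and follows essentially the same approach as the paper: both arguments rely on the convergence of the shock and rarefaction wave curves (the latter via rewriting $w_j^{(\theta)}$-relations as differences $(\rho^\theta-\rho_0^\theta)/\theta\to\ln(\rho/\rho_0)$), together with the observation that the vacuum criterion $u_R-u_L\ge(\rho_L^\theta+\rho_R^\theta)/\theta$ fails for all sufficiently small $\theta$ since the right side diverges. The paper organizes the argument around phase-plane regions and a two-case split on whether vacuum ever appears, while you proceed more directly via the implicit function theorem and transversality of the wave curves; the substance is the same.
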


\begin{proof}
We divide the proof into four steps.
		\begin{figure}
		\centering
		\includegraphics[width=0.33\linewidth]{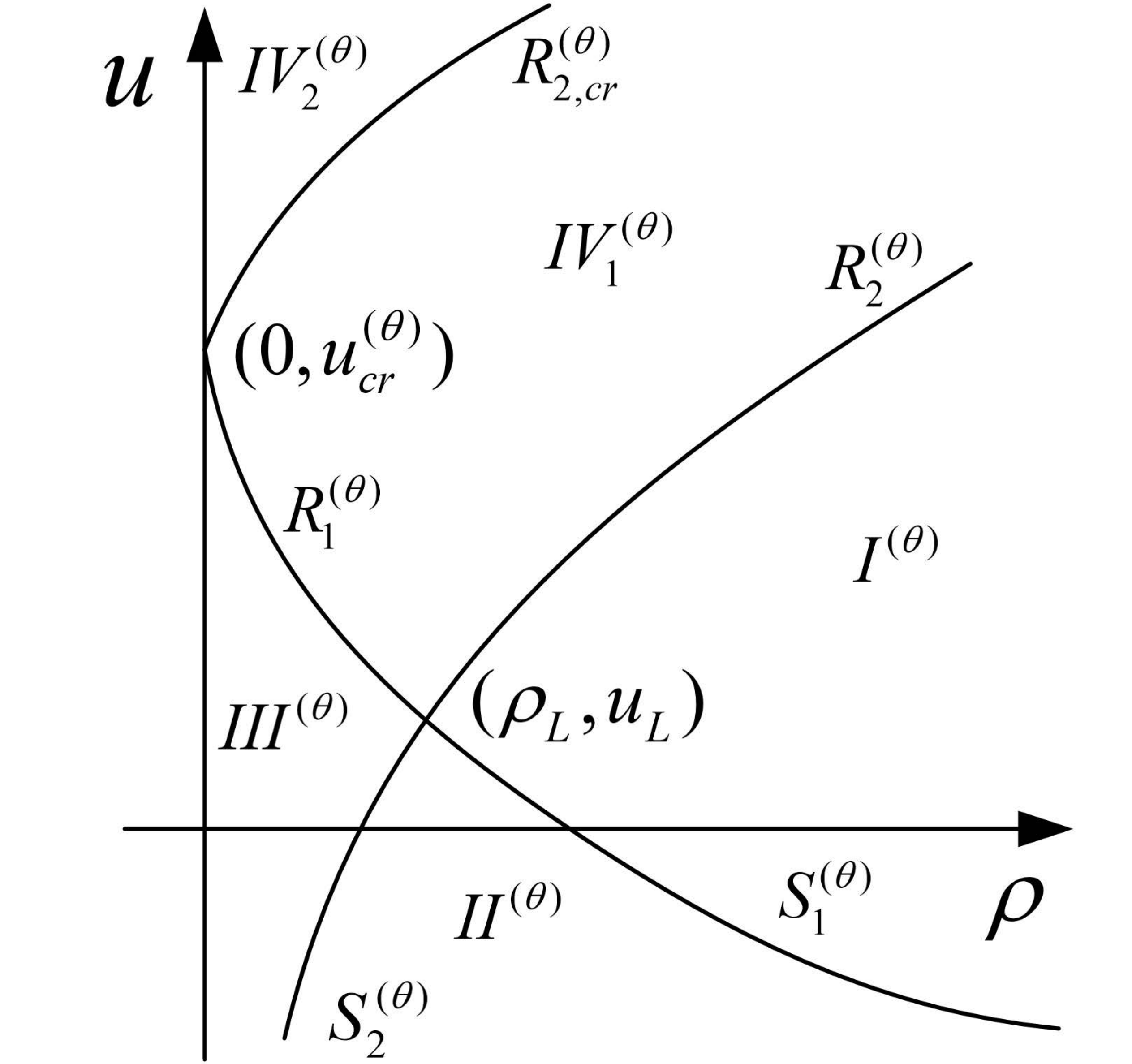}
		\caption{}
		\label{fig-rstheta}
	\end{figure}

\smallskip
1. For $\theta>0$, by \cite{Dafermos}, we have the following phase plane for fixed $(\rho_L, u_L)$
as shown in Fig. \ref{fig-rstheta},
on which $(0, u_{cr}^{(\theta)})$ with $u_{cr}^{(\theta)}=u_L+\frac{\rho_L^\theta}{\theta}$
is the intersect point of $R^{(\theta)}_1$ and $\{\rho=0\}$,
and $R^{(\theta)}_{2, cr}$ is the $2$-rarefaction curve starting from $(0, u_{cr}^{(\theta)})$
separating
region $IV_2^{(\theta)}$ from $IV_1^{(\theta)}$.
\begin{itemize}
\item If  $(\rho_{R}, u_{R})\in I^{(\theta)}$, then solution $(\rho^{(\theta)}, u^{(\theta)})$ is $S^{(\theta)}_1+R^{(\theta)}_2$.

\vspace{2pt}
\item If  $(\rho_{R}, u_{R})\in II^{(\theta)}$, then solution $(\rho^{(\theta)}, u^{(\theta)})$ is $S^{(\theta)}_1+S^{(\theta)}_2$.

\vspace{2pt}
\item If  $(\rho_{R}, u_{R})\in III^{(\theta)}$, then solution $(\rho^{(\theta)}, u^{(\theta)})$ is $R^{(\theta)}_1+S^{(\theta)}_2$.

\vspace{2pt}
\item If  $(\rho_{R}, u_{R})\in IV_1^{(\theta)}$, then solution $(\rho^{(\theta)}, u^{(\theta)})$ is $R^{(\theta)}_1+R^{(\theta)}_2$
with the middle state $\rho_M^{(\theta)}>0$.

\vspace{2pt}
\item If  $(\rho_{R}, u_{R})\in IV_2^{(\theta)}$, then solution $(\rho^{(\theta)}, u^{(\theta)})$
 is $R^{(\theta)}_1+R^{(\theta)}_2$ with the middle state $\rho_M^{(\theta)}=0$.
\end{itemize}

The criterion for the last case is $\rho_M^{\left(\theta\right)}=0$
if and only if
$\xi_{1}^{(\theta)}:=u_{L}+\frac{\rho^{\theta}_{L}}{\theta}\leq u_{R}-\frac{\rho^{\theta}_{R}}{\theta}=:\xi_{2}^{(\theta)}$.

\begin{figure}
	\centering
	\includegraphics[width=0.3\linewidth]{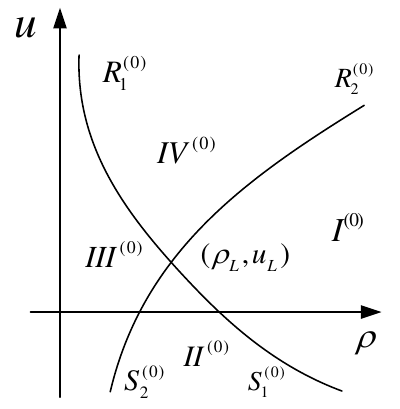}
	\caption{}
	\label{fig-rs0}
\end{figure}

\medskip
2. For $\theta=0$, we have the following phase plane for fixed $(\rho_L, u_L)$ as shown in  Fig. \ref{fig-rs0} similarly,
on which $R^{(0)}_1$ and $\{\rho=0\}$ do not intersect.
\begin{itemize}
\item If  $(\rho_{R}, u_{R})\in I^{(0)}$, then solution $(\rho^{(0)}, u^{(0)})$ is $S^{(0)}_1+R^{(0)}_2$.

\vspace{2pt}
\item If  $(\rho_{R}, u_{R})\in II^{(0)}$, then solution $(\rho^{(0)}, u^{(0)})$ is $S^{(0)}_1+S^{(0)}_2$.

\vspace{2pt}
\item If  $(\rho_{R}, u_{R})\in III^{(0)}$, then solution $(\rho^{(0)}, u^{(0)})$ is $R^{(0)}_1+S^{(0)}_2$.

\vspace{2pt}
\item If  $(\rho_{R}, u_{R})\in IV^{(0)}$, then solution $(\rho^{(0)}, u^{(0)})$ is $R^{(0)}_1+R^{(\theta)}_2$.
\end{itemize}

Then we have the existence and uniqueness of the Riemann solution  $(\rho^{(\theta)}, u^{(\theta)})$ for all $\theta\geq0$.

\smallskip
3. For the shock curves, a direct computation yields that
$(S^{(\theta)}_1, S^{(\theta)}_2)\rightarrow (S^{(0)}_1, S^{(0)}_2)$
and the shock speed $\sigma^{(\theta)}\rightarrow\sigma^{(0)}$
uniformly as $\theta\rightarrow 0$,
since $\rho_{L}$ and $\rho_{R}>0$.
Similar to the computation of \eqref{LogConergence},
for the case that $\rho_{L}>0$ and $\rho_{R}>0$,
$(R^{(\theta)}_1, R^{(\theta)}_2)\rightarrow (R^{(0)}_1, R^{(0)}_2)$ and
\begin{equation*}
(\lambda^{(\theta)}_1, \lambda^{(\theta)}_2)
=(u-\rho^{\theta}, u+\rho^{\theta})\,\rightarrow\,(\lambda^{(0)}_1, \lambda^{(0)}_2)=(u-1, u+1)
\qquad\mbox{uniformly as $\theta\rightarrow 0$}.
\end{equation*}
Thus, we conclude that the shock and rarefaction wave curves converge strongly,
for the two-side non-vacuum case.

\medskip
4. With the convergence of each wave curve, we turn to the convergence of
the corresponding Riemann solutions $(\rho^{(\theta)}, u^{(\theta)})$.
The argument is divided into two cases:

\smallskip
Case 1:  $\rho^{(\theta)}_{M}>0$ for all $\theta>0$.
In this case, for the fixed left-state $(\rho_L, u_L)$, the right-state $(\rho_R, u_R)$ belongs
to $\bigcap_{\theta\in(0, \theta_0]}\big(I^{(\theta)}\cup II^{(\theta)}\cup III^{(\theta)}\cup IV^{(\theta)}_1 \big)$.
The conclusion of Step 3 leads to the uniform convergence of $(\rho^{(\theta)}, u^{(\theta)})\to (\rho^{(0)}, u^{(0)})$
as $\theta\rightarrow 0$.
	
Case 2: There exists $\theta_{*}$ such that $\rho^{(\theta_{*})}_{M}=0$.
For this case, $(\rho_{R}, u_{R})\in IV_2^{(\theta_{*})}$,
then solution $(\rho^{(\theta_{*})}, u^{(\theta_{*})})$ is $R^{(\theta_{*})}_1+R^{(\theta_{*})}_2$ with
$\xi_{1}^{(\theta_{*})}\leq \xi_{2}^{(\theta_{*})}$.
Notice that
\begin{equation*}
\xi_{2}^{(\theta)}-\xi_{1}^{(\theta)}
=\big(u_{R}-\frac{\rho^{\theta}_{R}}{\theta}\big)-\big(u_{L}+\frac{\rho^{\theta}_{L}}{\theta}\big)
=(u_{R}-u_{L})-\frac{\rho^{\theta}_{L}+\rho^{\theta}_{R}}{\theta}.
\end{equation*}
There exists $\underline{\theta}_{*}< \theta_*$ such that
$\xi_{2}^{(\theta)}<\xi_{1}^{(\theta)}$ for $\theta<\underline{\theta}_{*}$,
which leads to $\rho^{(\theta)}_{M}>0$.
The rest of the argument is same as Case 1.
\end{proof}

\begin{remark}
From Case $2$, we observe that the centered vacuum state with $\rho^{(\theta)}_{M}=0$ vanishes when $\theta$ is small enough. This represents
the phenomenon of decavitation as $\gamma\to 1$,
which is different from the formation of cavitation and concentration in the vanishing pressure limit $($equivalently, the high Mach limit$)$ in
presented in Chen-Liu \cite{ChenLiu}.
\end{remark}
	
\medskip
Next, we consider the one-side vacuum case. Without loss of generality,
we assume that $\rho_{L}=0$, which connects $R^{(\theta)}_{2}$ with $R^{(0)}_{2}$.
Notice that the fluid velocity  $u_{L}$ is not well-defined in the vacuum with $\rho_{L}=0$ in general.
In the following analysis, we fix the right-state $(\rho_R, u_R)$.

\begin{figure}
	\centering
	\includegraphics[width=0.35\linewidth]{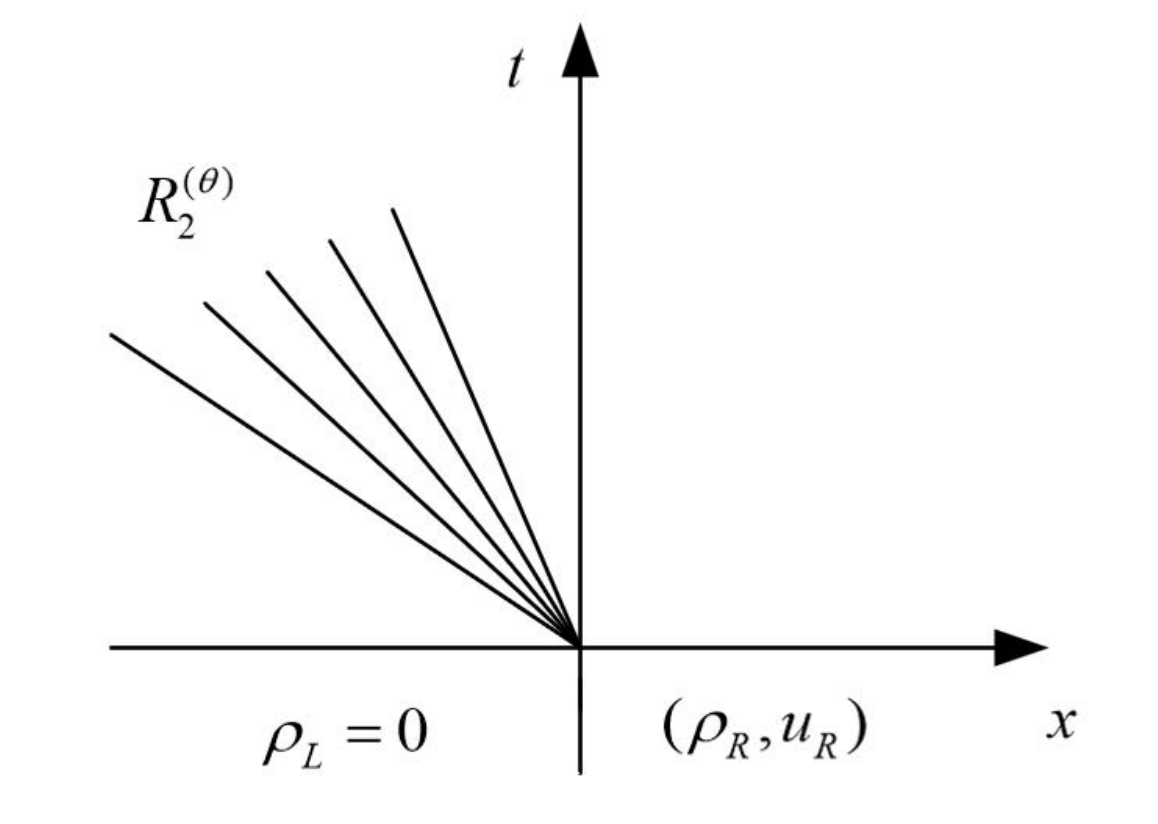}
	\caption{}
	\label{fig:vacuumtheta}
\end{figure}

For $\theta>0$, the Riemann solution $(\rho^{(\theta)}, u^{(\theta)})$ as shown in Fig. \ref{fig:vacuumtheta}
is
\begin{equation*}
(\rho^{(\theta)}, u^{(\theta)})
=\begin{cases}
((\frac{\theta\frac{x}{t}-\theta u_L+\rho_R^\theta}{\theta+1})^{\frac{1}{\theta}},
\frac{\frac{x}{t}+\theta u_L-\rho_R^\theta}{\theta+1})\qquad\,&\mbox{for $u_{R}-\frac{\rho^{\theta}_{R}}{\theta}< \frac{x}{t}< u_R+\rho_R^\theta$},\\[1mm]
(\rho_{R}, u_R)\qquad\,&\mbox{for $\frac{x}{t}\geq u_R+\rho_R^\theta$},
\end{cases}
\end{equation*}
and $\rho^{(\theta)}=0$ when $\frac{x}{t}\leq u_{R}-\frac{\rho^{\theta}_{R}}{\theta}$.
Moreover, $u^{(\theta)}(t, x)=u_{R}-\frac{\rho^{\theta}_{R}}{\theta}$
at $\frac{x}{t}=u_{R}-\frac{\rho^{\theta}_{R}}{\theta}+0$.

\begin{figure}
	\centering
	\includegraphics[width=0.35\linewidth]{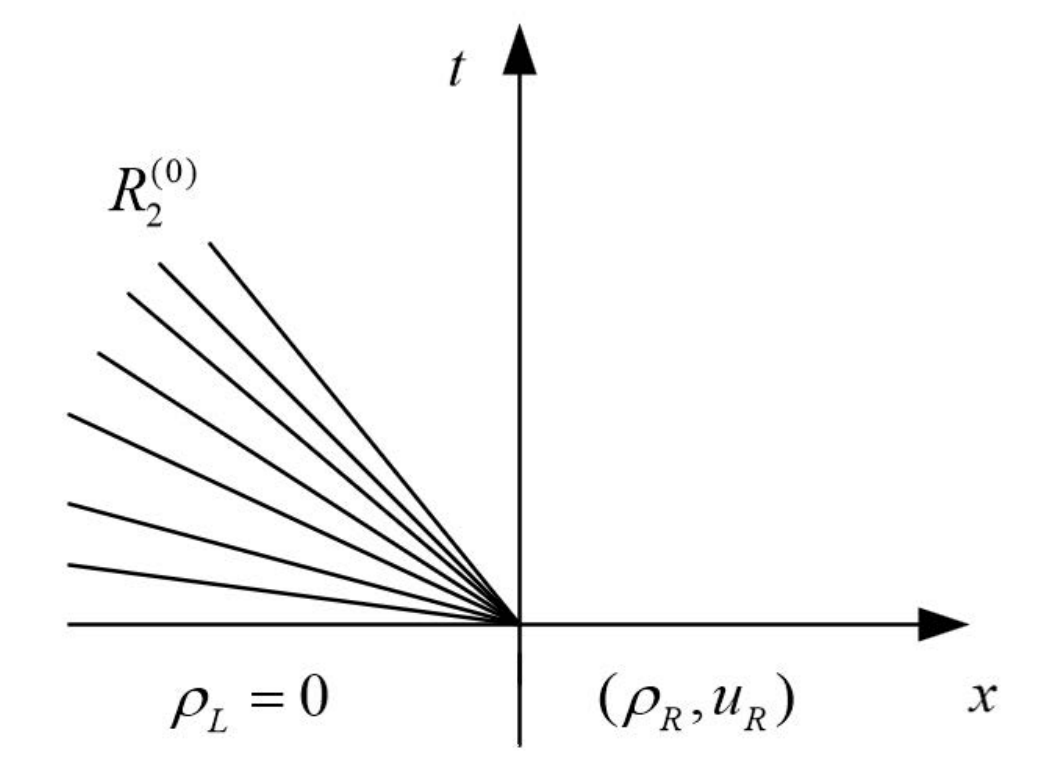}
	\caption{}
	\label{fig:vacuum0}
\end{figure}

For $\theta=0$, the Riemann solution $(\rho^{(0)}, u^{(0)})$ as shown in Fig. \ref{fig:vacuum0} is
\begin{equation*}
(\rho^{(0)}, u^{(0)})
=\begin{cases}
(\rho_{L}e^{\frac{x}{t}-u_L-1}, \frac{x}{t}-1)\qquad\,&\mbox{for $\frac{x}{t}< u_R+1$},\\[1mm]
(\rho_{R}, u_R)\qquad\,&\mbox{for $\frac{x}{t}\geq u_R+1$}.
\end{cases}
\end{equation*}
Moreover, $u^{(0)}(t, x)\rightarrow -\infty$ as $\frac{x}{t}\rightarrow -\infty$.

For $\theta\rightarrow 0$, it is direct to see that
$u_{R}-\frac{\rho^{\theta}_{R}}{\theta}\rightarrow-\infty$ and $u_{R}+\rho^{\theta}_{R}\rightarrow u_R+1$,
which leads to $(\rho^{(\theta)}, u^{(\theta)})\rightarrow	(\rho^{(0)}, u^{(0)})$.

Notice that
\begin{equation*}
\lim_{\theta\rightarrow0}\lim_{\frac{x}{t}\rightarrow u_{R}-\frac{\rho^{\theta}_{R}}{\theta}+0} c^{(\theta)}(\rho^{(\theta)}(t, x))=0,
\qquad
\lim_{\theta\rightarrow0} c^{(\theta)}(\rho^{(\theta)}(t, x))\equiv1,
\end{equation*}
which indicate the order of the limits on the sound speed $c^{(\theta)}$ close to the vacuum can not change in general.

The above case shows the convergence of the one-side vacuum states.
However, $u_L$ is not well-defined in the vacuum with $\rho_{L}=0$, so that \eqref{TheataUniformBound}
is not well-defined as well.

\smallskip
Finally, we construct a family of Riemann solutions $(\rho^{(\theta)}_A, u^{(\theta)}_A)$ approaching $(\rho^{(0)}, u^{(0)})$ with
the initial data satisfying \eqref{TheataUniformBound}. Choose the initial data:
\begin{equation*}
(\rho^{(\theta)}_A, u^{(\theta)}_A)(0, x)
=\begin{cases}
(\rho^{(\theta)}_L, u^{(\theta)}_L)\qquad\,&\mbox{for $x<0$},\\
(\rho_{R}, u_R)\qquad\,&\mbox{for $x>0$},
\end{cases}
\end{equation*}
with $\rho^{(\theta)}_L=O(\theta^{\frac{1}{\theta}})$ and
$u^{(\theta)}_L=u_R-\frac{\rho_{R}^\theta-(\rho^{(\theta)}_L)^\theta}{\theta}$.
It is direct to check that  $(\rho^{(\theta)}_A, u^{(\theta)}_A)(0, x)$
satisfy \eqref{TheataUniformBound}.

\begin{figure}
	\centering
	\includegraphics[width=0.35\linewidth]{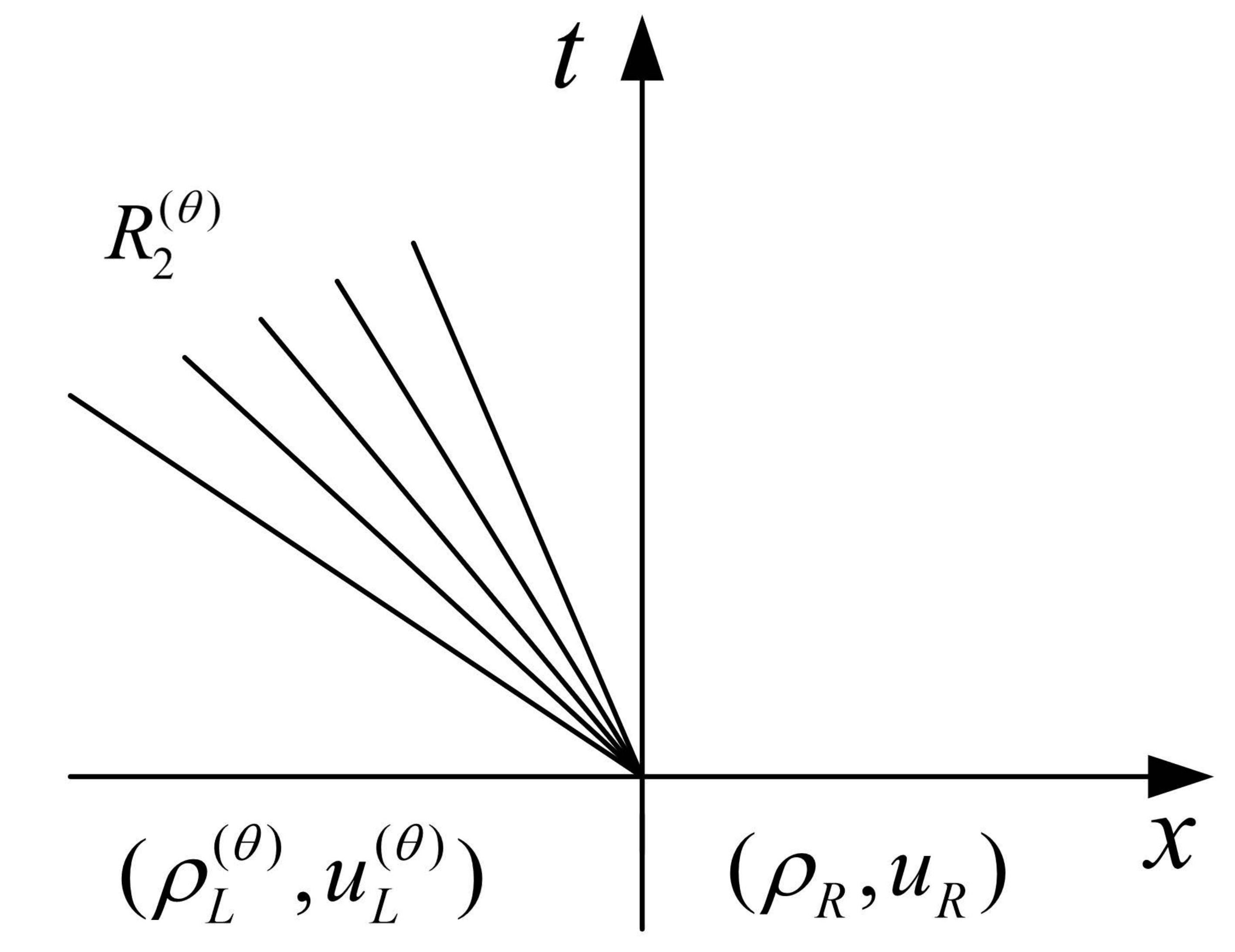}
	\caption{}
	\label{fig:vacuumthetaapp}
\end{figure}

Then the Riemann solutions $(\rho^{(\theta)}_A, u^{(\theta)}_A)$ as shown in Fig. \ref{fig:vacuumthetaapp} is
\begin{equation*}
(\rho^{(\theta)}_A, u^{(\theta)}_A)
=\begin{cases}
(\rho^{(\theta)}_{L}, u^{(\theta)}_L)\qquad\,&\mbox{for $\frac{x}{t}\leq u^{(\theta)}_L+(\rho^{(\theta)}_L)^\theta$},\\[1mm]
((\frac{\theta\frac{x}{t}-\theta u_L+\rho_R^\theta}{\theta+1})^{\frac{1}{\theta}},
\frac{\frac{x}{t}+\theta u_L-\rho_R^\theta}{\theta+1})\qquad\,&\mbox{for $u^{(\theta)}_L+(\rho^{(\theta)}_L)^\theta< \frac{x}{t}< u_R+ \rho_R^\theta$},\\[1mm]
(\rho_{R}, u_R)\qquad\,&\mbox{for $\frac{x}{t}\geq u_R+ \rho_R^\theta$}.
\end{cases}
\end{equation*}
When $\theta\rightarrow 0$, we see that $\rho^{(\theta)}_L\rightarrow 0$, $u^{(\theta)}_L\rightarrow -\infty$,
$u^{(\theta)}_L+\big(\rho^{(\theta)}_L\big)^\theta\rightarrow-\infty$,
and $u_{R}+\rho^{\theta}_{R}\rightarrow u_R+1$. Similarly,
we conclude that  $(\rho^{(\theta)}_A, u^{(\theta)}_A)\rightarrow (\rho^{(0)}, u^{(0)})$.

\medskip
\bigskip
\noindent {\bf Acknowledgments:}
The research of
Gui-Qiang G. Chen was supported in part by
the UK
Engineering and Physical Sciences Research Council Awards
EP/L015811/1, EP/V008854/1, and EP/V051121/1.
The research of Fei-Min Huang was supported by the NSFC Grant No.  12288201.
The research of Tian-Yi Wang was supported in part by the NSFC Grants
11971024 and 12061080.

\medskip

\end{document}